\documentclass{article}
\usepackage[a4paper]{geometry}
\usepackage{amsmath}
\usepackage{amssymb}
\usepackage{amsthm}
\usepackage{mathrsfs}
\usepackage{mathtools}

\addtolength{\hoffset}{-1cm} \addtolength{\textwidth}{2cm} \addtolength{\voffset}{-1.5cm} \addtolength{\textheight}{3.5cm}

\usepackage{tikz}
\usepackage{cleveref}
\usepackage{tikz-cd}
\usepackage{mathtools}

\usepackage{placeins}

\usepackage{float}

\usepackage{comment}

\usepackage{enumerate}

%--------Theorem Environments--------
%theoremstyle{plain} --- default
\theoremstyle{definition}
\newtheorem{thm}{Theorem}[section]
\crefname{thm}{Theorem}{Theorems}

\crefname{prop}{Proposition}{Propositions}
\newtheorem{lem}[thm]{Lemma}
\crefname{lem}{Lemma}{Lemmas}

\newtheorem{conj}[thm]{Conjecture}
\newtheorem{quest}[thm]{Question}

\crefname{defn}{Definition}{Definitions}

\newtheorem*{ack*}{Acknowledgements}

\newcommand{\co}{\operatorname{co}}

\newcommand{\gap}{\operatorname{gap}}

\newcommand{\mb}[1]{\mathbb{#1}}
\newcommand{\sub}{\subseteq}

\newcommand{\tT}{\theta}

\newcommand{\Ss}{\Sigma}
\newcommand{\sm}{\setminus}
\newcommand{\es}{\emptyset}

\usepackage{titling}

\setlength{\droptitle}{-4em}   % This is your set screw

\title{On Ruzsa’s discrete Brunn-Minkowski conjecture}
\author{Peter van Hintum\thanks{New College, University of Oxford, UK. email: peter.vanhintum@new.ox.ac.uk}, Peter Keevash\thanks{Mathematical Institute, University of Oxford, UK. Supported by ERC Advanced Grant 883810.}, Marius Tiba\thanks{Mathematical Institute, University of Oxford, UK. email: marius.tiba@maths.ox.ac.uk}}
%\date{}							% Activate to display a given date or no date
\allowdisplaybreaks

\begin{document}
\maketitle
\begin{abstract}
We prove a conjecture by Ruzsa from 2006 on a discrete version of the Brunn-Minkowski inequality,
stating that for any $A,B\subset\mathbb{Z}^k$ and $\epsilon>0$
with $B$ not contained in $n_{k,\epsilon}$ parallel hyperplanes
we have $|A+B|^{1/k}\geq |A|^{1/k}+\left(1-\epsilon\right)|B|^{1/k}$.
\end{abstract}

\section{Introduction}

The Brunn-Minkowski inequality is a foundational result for Convex Geometry and Asymptotic Geometric Analysis,
with diverse connections to other areas including Information Theory, Statistical Mechanics and Algebraic Geometry
(see the survey by Gardner \cite{gardner2002brunn}). 
In the continuous setting, for measurable $A,B\subset\mathbb{R}^k$ it states that
$|A+B|^{1/k} \ge |A|^{1/k} + |B|^{1/k}$. It is natural, and well-motivated 
by questions in Additive Combinatorics (see \cite{tao2006additive}),
to ask for an analogous inequality in the discrete setting of $A,B\subset\mathbb{Z}^k$. However, this task is surprisingly
difficult to define, let alone accomplish, as discussed in Ruzsa's ICM survey \cite{ruzsa2006additive}. 

Naturally, one should impose some non-degeneracy condition, otherwise the question reduces to subsets of $\mathbb{Z}$,
for which it is well-known and easy to show that $|A+B| \ge |A|+|B|-1$, with equality for (dilated) intervals.
Gardner and Gronchi  \cite{gardner2001brunn} 
determined the minimum value of $|A+B|$ in terms of $|A|$ and $|B|$ assuming that one or both
of $A$ and $B$ are full-dimensional. While this may at first seem to be a natural discrete analogue, it turns out that under
this weak non-degeneracy assumption the extremal examples are somewhat close to intervals, and the resulting bound 
on $|A+B|$ is much smaller than the `Brunn-Minkowski bound' $ |A|^{1/k} + |B|^{1/k}$. 
Green and Tao \cite{green2006compressions} showed that 
one can obtain an asymptotic Brunn-Minkowski bound assuming that $A$ and $B$ are dense subsets of a reasonable box.
Thus they obtained the correct bound, but under a very strong assumption; this motivates the question of proving this bound
under the natural minimal assumption, formulated by Ruzsa \cite[Conjecture 4.12]{ruzsa2006additive}. 

\begin{conj}\label{ruzsaconj}
For all $k \in \mathbb{N}$ and $\epsilon$ there exists $n_{k,\epsilon}$ so that if $A,B\subset\mathbb{Z}^k$ with
$B$ not covered by $n_{k,\epsilon}$ parallel hyperplanes then $|A+B|^{1/k}\geq |A|^{1/k}+(1-\epsilon)|B|^{1/k}$.
\end{conj}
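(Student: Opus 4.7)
The plan is to reduce the conjecture to the continuous Brunn--Minkowski inequality via a cube-thickening approximation, and to exploit the hyperplane non-degeneracy hypothesis on $B$ to eliminate the constant-factor loss in the discretisation. Writing $\widetilde A := A + [0,1)^k$ and $\widetilde B := B + [0,1)^k$, we have $\vol(\widetilde A) = |A|$, $\vol(\widetilde B) = |B|$, and $\widetilde A + \widetilde B \subseteq (A+B) + [0,2)^k$, so the continuous Brunn--Minkowski inequality yields only $|A+B|^{1/k} \geq \tfrac{1}{2}\bigl(|A|^{1/k} + |B|^{1/k}\bigr)$, which is off from the target by a factor of $2$. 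The goal is therefore to upgrade the constant $\tfrac{1}{2}$ to $1-\epsilon$, using the hypothesis that $B$ is spread across many parallel hyperplanes in every direction.

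My plan is a contradiction-plus-stability argument. Suppose $|A+B|^{1/k} < |A|^{1/k} + (1-\epsilon)|B|^{1/k}$; then, combined with the crude lower bound above, the pair $(A,B)$ lies in the near-extremal regime for discrete Brunn--Minkowski. The next step is to invoke a sharp quantitative stability theorem: such a near-extremal pair is close (after a unimodular transformation, and in symmetric difference) to a common convex template, namely both $A$ and $B$ are near dilates of a common lattice convex body $K$, possibly up to translation. The key property I would extract is that the discrepancy $|B| - \vol(\co(B))$ required for near-equality in continuous Brunn--Minkowski forces the template $K$ to be degenerate, i.e.\ approximately a cylinder over a lower-dimensional convex body. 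This in turn forces $B$ to be contained in $O_{k,\epsilon}(1)$ parallel hyperplanes, contradicting the hypothesis once $n_{k,\epsilon}$ is chosen larger than this bound.

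The main obstacle is obtaining a sharp enough stability theorem: we need an explicit structure result asserting that a Brunn--Minkowski deficit of order $\epsilon\,|B|^{1/k}$ forces $B$ into $O_{k,\epsilon}(1)$ hyperplanes. This should build on recent sharp stability results for continuous and discrete Brunn--Minkowski together with classification of discrete extremal configurations in the spirit of Gardner--Gronchi, but making the discretisation losses uniform in $\epsilon$ will be delicate, and dimension reduction (slicing $B$ by hyperplanes and descending to $k-1$) must be set up so that the hyperplane hypothesis is inherited by generic slices. A secondary technical point is the asymmetric regime where $|A|$ and $|B|$ have very different sizes: there the stability description takes a different form, and one must either reduce to the balanced case $|A| \sim |B|$ via standard sumset inequalities, or run the argument separately in each regime (noting that the case $|A| \gg |B|$ is essentially the Green--Tao asymptotic bound). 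The threshold $n_{k,\epsilon}$ will inherit the quantitative dependence of the stability theorem compounded across $k$ dimensional layers, hence a rapid (at least tower-type) growth in $k$ and $1/\epsilon$.
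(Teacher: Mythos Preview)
Your plan has a real gap at its core. The cube-thickening step and continuous Brunn--Minkowski stability cannot do what you want. Suppose $|A+B|^{1/k} < |A|^{1/k}+(1-\epsilon)|B|^{1/k}$. Then $\vol(\widetilde A+\widetilde B)^{1/k}\le 2|A+B|^{1/k}$ while $\vol(\widetilde A)^{1/k}+\vol(\widetilde B)^{1/k}=|A|^{1/k}+|B|^{1/k}$; so the continuous Brunn--Minkowski deficit for $(\widetilde A,\widetilde B)$ is of order $|A|^{1/k}+|B|^{1/k}$, not small. You are nowhere near the equality case, and no continuous stability theorem applies. Moreover, your proposed mechanism ``near-equality forces the template $K$ to be a cylinder'' is backwards: in the continuous inequality, equality holds for homothetic \emph{full-dimensional} convex bodies, not degenerate ones. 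There is no route from continuous stability to ``$B$ lies in few hyperplanes''.

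The paper's proof also uses a stability philosophy, but the structural input is additive-combinatorial rather than geometric. From $|A+B|^{1/k}\le |A|^{1/k}+|B|^{1/k}$ one gets (via Pl\"unnecke) that $A$ and $B$ have bounded doubling, hence by a Freiman-type theorem they sit in a bounded number of translates of a common proper $d'$-GAP $P$. The hyperplane hypothesis then forces $d'\ge k$, and a short argument with a discrete Brunn--Minkowski inequality \emph{inside a GAP} (the Green--Tao lemma) together with a $1/d'$-concave Pr\'ekopa--Leindler-type inequality across the translates forces $d'=k$ and in fact one translate each. At that point the Green--Tao lemma gives the inequality directly, with the loss coming only from the $n$-fullness of $P$. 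The unbalanced case $|A|\gg|B|$ is handled separately, using Petridis' form of Pl\"unnecke to control iterated sumsets of $B$, a Freiman--Bilu bound to locate $B$ in a $k$-GAP, and Ruzsa covering for $A$. Your intuition that the balanced and unbalanced regimes need different treatment is correct, but the actual work happens in $\mathbb{Z}$ via GAPs and Freiman structure, not via continuous stability; and the resulting dependence is $n_{k,\epsilon}=O_{k,t}(\epsilon^{-1})$, not tower-type.
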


Our main result establishes Conjecture \ref{ruzsaconj} in the following sharp form
that establishes the optimal quantitative dependence between the parameters.

\begin{thm}\label{quantcor}
For every $k\in \mathbb{N}$ and $t>0$ there exists $c^{\ref{quantcor}}_{k,t}$ so that
if $A,B\subset\mathbb{Z}^k$ with $t|B|\leq |A|$ and $B$ is not covered by $n$ parallel hyperplanes
then $|A+B|^{1/k}\geq |A|^{1/k}+\left(1-c^{\ref{quantcor}}_{k,t}n^{-1}\right)|B|^{1/k}$.
\end{thm}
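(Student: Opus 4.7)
The plan is to prove \cref{quantcor} by induction on the dimension $k$. The base case $k=1$ is immediate: if $B \subseteq \mathbb{Z}$ is not covered by $n$ points then $|B| \geq n+1$, so the classical $|A+B| \geq |A|+|B|-1$ gives $|A+B| \geq |A| + (1 - 1/(n+1))|B|$, so we may take $c^{\ref{quantcor}}_{1,t}=1$.

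For the inductive step, fix $A,B\subseteq \mathbb{Z}^k$ with $|A|\geq t|B|$ and $B$ not covered by $n$ parallel hyperplanes. I would select a primitive direction $v\in\mathbb{Z}^k$ and slice: writing $A_u := A\cap \{v\cdot x = u\}$ and $B_w := B\cap\{v\cdot x=w\}$, we have
\[
|A+B| \;=\; \sum_{s}\bigl|(A+B)\cap\{v\cdot x=s\}\bigr| \;\geq\; \sum_{s}\max_{u+w=s}|A_u+B_w|.
\]
The key step is to choose $v$ so that a $1-O(1/n)$ fraction of $|B|$ lies in slices $B_w$ that are themselves non-degenerate in dimension $k-1$, i.e.\ not covered by $n':= \lfloor cn\rfloor$ parallel $(k-2)$-flats, for a constant $c=c(k)>0$. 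The existence of such a $v$ should follow from the hypothesis on $B$ by an averaging argument over directions: if in every direction a large mass of $B$ collapsed into thin slices, then a pigeonhole step would produce some direction with $\leq n$ parallel hyperplanes covering $B$, contradicting the assumption.

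On each good slice I would pair $B_w$ with a slice $A_{u(w)}$ of proportionate size---matching the aspect ratio so that $|A_{u(w)}|/|B_w|$ stays comparable to $t$---via a Cavalieri-style rearrangement of the level sets of the functions $u\mapsto |A_u|^{1/(k-1)}$ and $w\mapsto |B_w|^{1/(k-1)}$. The inductive hypothesis in dimension $k-1$ then yields
\[
|A_{u(w)} + B_w|^{1/(k-1)} \;\geq\; |A_{u(w)}|^{1/(k-1)} + \bigl(1 - c^{\ref{quantcor}}_{k-1,t'}/n'\bigr)\,|B_w|^{1/(k-1)},
\]
and summing over good $w$ and invoking the one-dimensional discrete Brunn--Minkowski along the $v$-axis converts these $(k-1)$-dimensional slice bounds into the desired $k$-dimensional bound. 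For the bad slices, I would fall back on the Gardner--Gronchi bound or a direct packing estimate; the contribution of these slices must be absorbed into the $O(|B|^{1/k}/n)$ error term.

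The principal obstacle is producing the direction $v$ with so few bad slices. Since the rate $n^{-1}$ claimed in the theorem is essentially sharp, we cannot afford more than an $O(1/n)$ fraction of $|B|$ to lie in degenerate slices, yet the non-degeneracy hypothesis on $B$ only asserts that \emph{no} direction collapses $B$ into $\leq n$ hyperplanes---a genuinely weaker statement than what we need. Converting the former into the latter is a $k$-dimensional convex-combinatorial problem that I expect to require a careful projection or convex hull analysis of $B$. A secondary difficulty is propagating the parameter $t$ through the induction: the new ratio $t'=|A_{u(w)}|/|B_w|$ must remain bounded away from $0$ and $\infty$ uniformly in $w$ so that the inductive constant $c^{\ref{quantcor}}_{k-1,t'}$ stays finite, which dictates how carefully the pairing $u(w)$ must be designed.
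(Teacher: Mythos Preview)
Your slicing/induction strategy has a genuine gap at exactly the point you flag as the ``principal obstacle'': the good direction $v$ need not exist. Take $B$ to be $n+1$ points on the moment curve $\{(i,i^2,\ldots,i^{k-1}):0\le i\le n\}$. Any hyperplane meets $B$ in at most $k-1$ points, so $B$ is not covered by $n$ parallel hyperplanes (indeed not by $n/(k-1)$). Yet for \emph{every} direction $v$, each nonempty slice $B_w$ has at most $k-1$ points (and for $v=e_1$ each slice is a single point), hence is trivially covered by one $(k-2)$-flat. Thus $100\%$ of the mass of $|B|$ lies in degenerate slices, not $O(1/n)$, and no averaging over directions can repair this: the hypothesis ``$B$ not in $n$ hyperplanes'' guarantees many slices, not thick slices. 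Your proposed pigeonhole conversion (``if every direction has much degenerate mass, then some direction covers $B$ by $\le n$ hyperplanes'') is simply false.

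The paper proceeds along an entirely different route that sidesteps this obstruction. Rather than using the non-degeneracy of $B$ directly to build the bound slice-by-slice, it argues by \emph{stability}: assuming $|A+B|$ is near the target, one applies Freiman-type structure (and, in the unbalanced regime $|A|\gg|B|$, Pl\"unnecke on iterated sumsets of $B$) to force both $A$ and $B$ into translates of a common $k$-GAP $P$ with $|P|=O_{k,t}(|B|)$. The non-degeneracy hypothesis is then used only to certify that the GAP is genuinely $k$-dimensional and $n$-full. Once $A,B$ live densely in (translates of) $P$, a Green--Tao ``Brunn--Minkowski in boxes'' lemma finishes. In short: the induction-on-slices approach tries to push non-degeneracy \emph{down} to slices, which fails; the paper instead uses the small-sumset hypothesis to pull structure \emph{up}, and invokes non-degeneracy only at the end to rule out low-dimensional collapse.
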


To see that the dependence on $t$ is necessary, consider the example 
where $A$ is an interval in $\mathbb{Z} \subset \mathbb{Z}^k$
and $B$ is an interval together with $k+n$ points in general position,
with $t|B|=|A|=m \gg t^{-1} \gg n \gg k$. Here we have $|A+B| \sim |B| + n|A| = (1+nt)|B|$
and  $(|A|^{1/k}+(1-c/n)|B|^{1/k})^k \sim ((1-c/n)^k+kt^{1/k})|B|$,
so the conclusion of Theorem \ref{quantcor} cannot hold if $c < t^{1/k}$.

\subsection{Brunn-Minkowski and the additive hull}

We will deduce Theorem \ref{quantcor} from more general results
in which we bound $|A+B|$ for $A,B \subset \mathbb{Z}$,
where $B$ satisfies a $k$-dimensional non-degeneracy condition,
in the sense of the additive hull introduced in \cite{van2023locality}.
To describe this, we make the following definitions.

Given an axis-aligned box $C \subset \mb{R}^k$ and a linear map $\phi: \mb{Z}^k \to \mb{Z}$ we call 
$P=\phi(C \cap \mb{Z}^k)$ a $k$-dimensional generalised arithmetic progression ($k$-GAP) .
We say $P$ is $t$-proper if $\phi$ is injective on $tC \cap \mb{Z}^k$.
For $B \sub \mathbb{Z}$ we let $\gap_{n}^{k}(B)$ be a smallest set containing $B$ 
of the form $X+P$ where $|X|\leq n$ and $P$ is a ($1$-)proper $k$-GAP.
One can think of $\gap_{n}^{k}(B)$ as a parameterised family
of hulls that aim to capture the additive structure of $B$.

Theorem \ref{quantcor} will follow immediately from the following two results
in which we consider separately two regimes for $|B|/|A|$, as when $|B|/|A|$ is small 
we obtain a stronger bound not depending on this ratio.

\begin{thm}\label{main1}
For every $k\in \mathbb{N}$ and $t>0$ there exists $c^{\ref{main1}}_{k,t}, e^{\ref{main1}}_{k,t}$ so that 
if $A,B\subset\mathbb{Z}$ with $t|B|\leq |A|\leq t^{-1}|B|$
and $|\gap_{n}^{k-1}(B)|\geq e^{\ref{main1}}_{k,t}|B|$
then $|A+B|^{1/k}\geq |A|^{1/k}+(1-c^{\ref{main1}}_{k,t}n^{-1})|B|^{1/k}$.
\end{thm}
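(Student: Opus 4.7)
The plan is to transfer the problem from $\mathbb{Z}$ to $\mathbb{Z}^k$ via a Freiman isomorphism and then apply a sharp discrete Brunn--Minkowski stability inequality there, drawn either from the authors' earlier work \cite{van2023locality} or from the companion $\mathbb{Z}^k$ statement that partners \Cref{main1} in the decomposition of \Cref{quantcor}.

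Argue by contradiction: suppose $|A+B|^{1/k}<|A|^{1/k}+(1-c/n)|B|^{1/k}$ for $c=c_{k,t}$ sufficiently large. Combined with $t|B|\leq|A|\leq t^{-1}|B|$, this forces $|A+B|\leq K_{k,t}|B|$, i.e., $A\cup B$ has bounded doubling. By Pl\"unnecke--Ruzsa together with Freiman's theorem in $\mathbb{Z}$, there is a proper $d$-GAP $P\subset\mathbb{Z}$ containing (a translate of) $A\cup B$ with $d\leq d_0(K_{k,t})$ and $|P|\leq L_{k,t}|B|$. Compressing to the smallest dimension $d^*\leq d$ that still admits such a container for $B$, I claim $d^*\geq k$: indeed, if $d^*\leq k-1$ then $P$ itself, padded trivially to a $(k-1)$-GAP, witnesses $|\gap_{n}^{k-1}(B)|\leq|P|\leq L_{k,t}|B|$, contradicting the hypothesis as soon as $e_{k,t}>L_{k,t}$.

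A Freiman $2$-isomorphism $\phi: C\cap\mathbb{Z}^{d^*}\to P$ yields preimages $\tilde A,\tilde B\subset\mathbb{Z}^{d^*}$ with $|\tilde A|=|A|$, $|\tilde B|=|B|$, and $|\tilde A+\tilde B|=|A+B|$. Any cover of $\tilde B$ by $m$ parallel hyperplanes in $\mathbb{Z}^{d^*}$ pulls back through $\phi$ to an $X+P'$ cover of $B$ with $|X|=m$ and $P'$ a proper $(d^*-1)$-GAP in $\mathbb{Z}$; combined (after a projection step reducing $d^*$ to $k$ if needed) with the $\gap_n^{k-1}$ hypothesis, this forces $\tilde B$ to require $\Omega(n)$ parallel hyperplanes. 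The sharp $\mathbb{Z}^k$ discrete Brunn--Minkowski inequality then gives $|\tilde A+\tilde B|^{1/k}\geq|\tilde A|^{1/k}+(1-c'_{k,t}/n)|\tilde B|^{1/k}$, which contradicts the assumption once $c$ is chosen large enough in terms of $c'_{k,t}$.

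The main obstacle is the case $d^*>k$: Freiman's theorem in general produces a proper GAP of dimension much larger than $k$, and compressing it down to exactly $k$ while preserving both properness and the $O(|B|)$ size bound is nontrivial. One must exploit that the small sumset bound $|A+B|\lesssim(|A|^{1/k}+|B|^{1/k})^k$ is inconsistent with $B$ genuinely occupying a higher-dimensional structure, forcing the reduction, and must track the $n^{-1}$ quantitative dependence carefully through the isomorphism and each compression step so that the final contradiction is sharp.
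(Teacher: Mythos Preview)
Your outline has the right opening moves (Pl\"unnecke to get bounded doubling, Freiman to get a GAP container, dimension $\ge k$ from the $\gap$ hypothesis), but the endgame contains a circularity and an unresolved reduction. The ``sharp $\mathbb{Z}^k$ discrete Brunn--Minkowski inequality'' you invoke for $\tilde A,\tilde B$ is precisely \Cref{quantcor}, and in this paper \Cref{quantcor} is \emph{derived} from \Cref{main1} (together with \Cref{main2}); neither the companion \Cref{main2} (which only treats the regime $|A|\gg|B|$) nor \cite{van2023locality} (whose sharp result is for $A=B$) supplies an independent balanced-case inequality you can quote. The non-circular input actually available is the Green--Tao ``Brunn--Minkowski in a box'' (\Cref{doublinginGAPs}), but that requires $A$ and $B$ to sit as dense subsets of a single $n$-full $k$-GAP, which is exactly the structural statement you have not yet earned.

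The paper gets there by a route you have not sketched. It applies the \emph{separated} Freiman theorem (\Cref{Freiman}) to $S=A\cup B$, obtaining $S\subset X+P$ with $P$ a $d'$-GAP and $X+X$ being $2\cdot P$-separated; this separation lets one split $X=Y\sqcup Z$ with $A\subset Y+P$, $B\subset Z+P$ and the pieces $A_y+B_z$ pairwise disjoint. Applying \Cref{doublinginGAPs} cosetwise and summing via \Cref{kconcaveprekopa} yields $|A+B|^{1/d'}\ge |A|^{1/d'}+(1-o(1))|B|^{1/d'}$. The case $d'>k$ --- the obstacle you flag but do not resolve --- is then killed not by ``compressing'' the GAP but by the elementary observation that for $d'>k$ and comparable $|A|,|B|$ this $d'$th-root inequality is strictly stronger than the hypothesised upper bound $|A+B|^{1/k}\le |A|^{1/k}+(1+o(1))|B|^{1/k}$, forcing $d'=k$. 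Finally, the stability clause of \Cref{kconcaveprekopa} forces $|Y|=|Z|=1$, giving the single-box containment of \Cref{stab1}, from which \Cref{main1} follows by one more application of \Cref{doublinginGAPs}. Your projection/compression step and your appeal to a black-box $\mathbb{Z}^k$ inequality bypass exactly these two ideas, and without them the argument does not close.
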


\begin{thm}\label{main2}
For every $k\in\mathbb{N}$ there exists $L^{\ref{main2}}_k$, $e^{\ref{main2}}_{k}$, $c^{\ref{main2}}_{k}$
so that the following holds for any $A,B\subset \mathbb{Z}$ 
where $L^{\ref{main2}}_k \le 2^j \le (|A|/|B|)^{1/k} \le 2^{j+1}$ with $j \in \mb{N}$.
If $|\gap_{\ell' n}^{k-1}(\ell' \cdot B)|\geq e^{\ref{main2}}_{k}\ell'^k|B|$
for any $\ell'=2^{i'}$, $i' \in\{0,\dots,j+1\}$ then 
$|A+B|^{1/k}\geq |A|^{1/k}+(1-c^{\ref{main2}}_{k}n^{-1})|B|^{1/k}$.
\end{thm}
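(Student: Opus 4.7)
My plan is to reduce Theorem~\ref{main2} to Theorem~\ref{main1} by an induction on $j$. The base case $j = j_0 := \lceil \log_2 L^{\ref{main2}}_k \rceil$ has $|A|/|B| \leq (2L^{\ref{main2}}_k)^k$ bounded in terms of $k$ alone; the $\ell'=1$ instance of the hypothesis together with Theorem~\ref{main1}, applied with $t := (2L^{\ref{main2}}_k)^{-k}$, then delivers the conclusion, provided we take $e^{\ref{main2}}_k \geq e^{\ref{main1}}_{k,t}$ and $c^{\ref{main2}}_k \geq c^{\ref{main1}}_{k,t}$.

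For the inductive step $j \to j+1$, the natural move is to enlarge $B$ to $B^+ := B + H$, where $H \subset \mathbb{Z}$ is a carefully chosen set of size $2^k$ such that (i) the $2^k$ shifted copies of $B$ in $B^+$ are essentially disjoint, so $|B^+| = 2^k|B|$ and $|A|/|B^+|$ falls in the previous dyadic band, and (ii) the dilation-hypothesis for $B$ at indices $\ell' \in \{1,\dots,2^{j+1}\}$ descends to the corresponding hypothesis for $B^+$ at indices $\ell' \in \{1,\dots,2^j\}$, using that $\gap^{k-1}_{\ell' n}(\ell' \cdot(B + H))$ is controlled by the hull of $\ell' \cdot B$ together with the GAP-structure of $\ell' \cdot H$. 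Applying the inductive hypothesis to $(A, B^+)$ then yields
\[
|A + B^+|^{1/k} \geq |A|^{1/k} + (1 - c^{\ref{main2}}_k/n)|B^+|^{1/k} = |A|^{1/k} + 2(1 - c^{\ref{main2}}_k/n)|B|^{1/k}.
\]

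The main obstacle is transferring this estimate back to a bound on $|A+B|$. The crude inequality $|A+B^+| \leq 2^k|A+B|$ is far too lossy: after taking $k$th roots it collapses the $|A|^{1/k}$ term, which dominates in the regime $|A| \gg |B|$. A successful argument must instead establish the sharp overlap bound $|A + B^+| \leq (1 + c'/\alpha)^k|A+B|$ with $\alpha := (|A|/|B|)^{1/k}$, i.e.\ most of the $2^k$ shifted copies of $A+B$ that tile $A+B^+$ overlap nearly completely. Proving such a tight overlap estimate is the crux, and it is precisely why the hypothesis is demanded at every dyadic scale $\ell' \in \{1,\dots,2^{j+1}\}$: the overlap of $(A+B)+h$ and $(A+B)+h'$ for $h, h' \in H$ is governed by the alignment of $B$ with the vector $h - h'$, hence by the $(k-1)$-dimensional additive structure of $B$ at the scale $|h-h'|$. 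I expect that the generators of $H$ must be chosen along the directions of the $(k-1)$-GAP hull of $2^{j+1} \cdot B$, and that the detailed overlap accounting—combining the hull data on all the dilations $\ell' \cdot B$ simultaneously—will be the most technical part of the proof.
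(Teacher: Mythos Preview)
Your inductive scheme has a genuine gap at the transfer step, and the numerology shows it cannot be repaired as stated. Suppose the induction gives $|A+B^+|^{1/k}\ge |A|^{1/k}+2(1-c/n)|B|^{1/k}$ and you have the overlap bound $|A+B^+|\le (1+c'/\alpha)^k|A+B|$. Writing $\alpha=(|A|/|B|)^{1/k}$ and $\beta=1-c/n$, a line of algebra shows that recovering $|A+B|^{1/k}\ge |A|^{1/k}+\beta|B|^{1/k}$ requires $c'\le \beta\alpha/(\alpha+\beta)<\beta<1$. Now the only structural input you have on $A+B^+$ is the assumed failure of the conclusion, which gives $|A'+B|/|A'|<(1+\beta/\alpha)^k$ after the Petridis reduction; taking $H\subset B-b_0$ so that $B+H\subset 2\cdot B-b_0$, Pl\"unnecke yields $|A'+B^+|\le |A'+2B|\le (1+\beta/\alpha)^{2k}|A'|\le (1+\beta/\alpha)^{2k}|A'+B|$. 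This is precisely $c'\approx 2\beta\approx 2$, on the wrong side of the threshold $c'<1$. Improving to $c'<1$ would require showing that $A+B$ is genuinely near-periodic in the directions of $H$, and nothing in the hypotheses (which constrain only the dilations of $B$, not $A$) provides this. There is also a secondary leak: from $\ell'\cdot B\subset \ell'\cdot B^+$ one only gets $|\gap_{\ell'n}^{k-1}(\ell'\cdot B^+)|\ge e_k\ell'^k|B|=e_k\ell'^k|B^+|/2^k$, so the non-degeneracy constant degrades by $2^k$ at each step of the induction.

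The paper's proof is not inductive in $j$ at all. It passes once to the Petridis minimiser $A'$, uses Pl\"unnecke to get $|\ell\cdot B|\le |A'+\ell\cdot B|\le e^k|A|$, and then pigeonholes a dyadic scale $\ell'\le\ell$ at which $|2\ell'\cdot B|<(2^k+2^{-2k})|\ell'\cdot B|$. Freiman's theorem places $\ell'\cdot B$ in a proper $k$-GAP, and a dedicated shrinking lemma (\Cref{shrinkingthebox}) contracts this to a single $n$-full $k$-GAP $P$ with $B\subset P$ and $|P|=O_k(|B|)$. Ruzsa covering and a merging argument then put $A'$ into $O_k(1)$ separated translates of $\ell\cdot P$, and the discrete Brunn--Minkowski inequality in boxes (\Cref{doublinginGAPs}) combined with \Cref{kconcaveprekopa} finishes directly. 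In other words, the structural information you would need to push $c'$ below $1$ is exactly what the paper establishes, but it is used once to reduce to a box configuration rather than fed through an induction.
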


The non-degeneracy conditions in Theorems \ref{main1} and \ref{main2}
might at first seem awkward to verify, but in fact they hold trivially when applying 
these theorems to prove Theorem \ref{quantcor}. Indeed, consider $A,B \sub \mb{Z}^k$
such that $B$ is not contained in $n$ parallel hyperplanes. We note for any $\ell' \in \mb{N}$
that the $\ell'$-fold sumset $\ell' \cdot B$ is not contained in $\ell' n$ parallel hyperplanes.
Now fix a linear map $\phi: \mb{Z}^k \to \mb{Z}$ with sufficiently disparate coefficients that is injective on a large box 
containing $A$, $A+B$ and $L^{\ref{main2}}_k \cdot B$.
We note that $|\gap_{\ell' n}^{k-1}(\ell' \cdot B)|>e^{\ref{main2}}_{k}\ell'^k|B|$ for all $\ell' \le L^{\ref{main2}}_k $.
Thus we can apply  Theorem \ref{main1} or \ref{main2} to $\phi(A)$ and $\phi(B)$
and deduce Theorem \ref{quantcor}. 

\subsection{Stability}

We will prove Theorems \ref{main1} and \ref{main2} by the Stability Method.
This is a well-known technique in Extremal Combinatorics, but apparently not as widely exploited 
in Additive Combinatorics prior to \cite{van2023locality}. 

The idea will be to describe the approximate structure of sets $A,B$ 
as in our theorems for which $|A+B|$ is approximately minimised, from which 
it will then be straightforward to deduce the required bound on $|A+B|$ 
from a variant of the Green-Tao discrete Brunn-Minkowski result mentioned above. 

When $A$ and $B$ are of roughly comparable sizes,
the following result shows that they are dense subsets 
of translates of the same box. 

\begin{thm}\label{stab1}
For every $k \in \mathbb{N}$ and $t>0$ there exist $ n^{\ref{stab1}}_{k,t}, e^{\ref{stab1}}_{k,t}, \Delta^{\ref{stab1}}_{k,t}, f^{\ref{stab1}}_{k,t}>0$ so that the following holds. 
Let $A,B\subset\mathbb{Z}$ with $t|B|\leq |A|\leq t^{-1}|B|$ such that
$|\gap_{n^{\ref{stab1}}_{k,t}}^{k-1}(B)|\geq e^{\ref{stab1}}_{k,t}|B|$
and  $|A+B|^{1/k}\leq |A|^{1/k} + (1+\Delta^{\ref{stab1}}_{k,t})|B|^{1/k}$. 
Then there is a $10$-proper $k$-GAP $P$ 
with  $|P| \leq f^{\ref{stab1}}_{k,t}|A|$
such that $A$ and $B$ are contained in translates of $P$.
\end{thm}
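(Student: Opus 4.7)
The plan is to combine the small-doubling implied by $|A+B|^{1/k}\le|A|^{1/k}+(1+\Delta)|B|^{1/k}$ with the $\gap$-non-degeneracy of $B$ to cover $A\cup B$ by a common proper $k$-GAP of size $O_{k,t}(|A|)$, and then upgrade it to be $10$-proper. Since $t|B|\le|A|\le t^{-1}|B|$, the hypothesis gives $|A+B|\le K|A|$ for some $K=K(k,t,\Delta)$, and Plünnecke--Ruzsa then bounds every iterated sumset of $A$ and $B$ by $O_{k,t,\Delta}(|A|)$. A quantitative Freiman theorem applied to $A\cup B$ therefore produces a proper GAP $Q=\phi(C\cap\mathbb{Z}^d)$ with $A\cup B\subset Q$, some bounded dimension $d$, and $|Q|=O_{k,t,\Delta}(|A|)$.

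The crucial step is to pin down $d=k$ exactly. For the lower bound $d\ge k$: if $d\le k-1$, then viewing $Q$ as a $(k-1)$-GAP (by appending trivial coordinates of length $1$) gives $|\gap^{k-1}_{1}(B)|\le|Q|=O_{k,t}(|B|)$, contradicting the hypothesis once $e^{\ref{stab1}}_{k,t}$ is chosen larger than this implicit constant. For the upper bound $d\le k$: suppose $d\ge k+1$. Via $\phi$, $A$ and $B$ are positive-density subsets of the $d$-dimensional box $C$, so the Green--Tao Brunn--Minkowski theorem for dense subsets of a box yields
\[
|A+B|\ge\bigl((\alpha^{1/d}+1)^d-\eta\bigr)|B|,
\]
where $\alpha=|A|/|B|\in[t,t^{-1}]$ and $\eta\to 0$ as $|A|\to\infty$. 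A direct calculus check (differentiating $d\mapsto d\log(\alpha^{1/d}+1)$) shows the map $d\mapsto(\alpha^{1/d}+1)^d$ is strictly increasing, so for $d\ge k+1$ this lower bound exceeds the hypothesis's upper bound $(\alpha^{1/k}+1+\Delta)^k|B|$ as long as $\Delta^{\ref{stab1}}_{k,t}$ is chosen smaller than the positive constant
\[
\min_{\alpha\in[t,t^{-1}]}\left(\frac{(\alpha^{1/(k+1)}+1)^{k+1}}{(\alpha^{1/k}+1)^k}-1\right).
\]

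Having forced $d=k$, it remains to refine the proper $k$-GAP $Q$ into a $10$-proper $k$-GAP $P$ of size $\le f^{\ref{stab1}}_{k,t}|A|$ still containing translates of $A$ and $B$. This is a standard refinement step: any failure of $10$-properness of $\phi$ produces a short integer relation among its coefficients on $10C$, which lets one re-parametrise $Q$ as a proper GAP of strictly smaller dimension, contradicting the lower bound $d\ge k$ already established; alternatively one applies a Green--Ruzsa style proper-embedding lemma tailored to dimension exactly $k$. The main obstacle is the upper bound $d\le k$ above, which requires the Green--Tao inequality with an error term controlled uniformly in $k$ and $t$, and a careful choice of $\Delta^{\ref{stab1}}_{k,t}$ strictly below the concrete gap displayed, so that the two quantitative bounds actually collide.
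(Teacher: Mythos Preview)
Your outline has a genuine gap at the step where you invoke the Green--Tao box Brunn--Minkowski inequality to rule out $d\ge k+1$. The error term $\eta$ in that inequality is controlled not by the density of $A,B$ in the box $C$ but by the \emph{minimum side length} of $C$ (see the paper's \Cref{doublinginGAPs}: the loss is $d\,n^{-1}|P|$ where $n$ is the shortest side). The standard quantitative Freiman theorem you invoke gives only a proper $d$-GAP $Q$ of size $O_{k,t}(|A|)$ with $d=O_{k,t}(1)$; it gives no lower bound whatsoever on the side lengths. For instance $Q$ could be $\phi([0,2]\times[0,N]^{d-1})$, in which case the Green--Tao error is $\Theta(|Q|)$ rather than $o(|B|)$, and your claim ``$\eta\to0$ as $|A|\to\infty$'' fails. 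Thus the displayed lower bound $|A+B|\ge\bigl((\alpha^{1/d}+1)^d-\eta\bigr)|B|$ is not available, and the argument forcing $d\le k$ collapses.

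The paper's proof confronts exactly this obstacle and resolves it with two additional ingredients that your outline is missing. First, it replaces ordinary Freiman by the separated Freiman theorem (\Cref{Freiman}), which produces a covering $S\subset X+P$ with $P$ an $n_{d'}$-\emph{full} $10$-proper $d'$-GAP and $|X|=O(1)$; the price for fullness is that $A$ and $B$ may now spread over several translates $x+P$. Second, to handle these multiple translates it applies the box inequality (\Cref{doublinginGAPs}) on each pair of translates and then aggregates via the discrete Pr\'ekopa--Leindler type inequality of \Cref{kconcaveprekopa}; the \emph{stability} clause of that lemma is what finally forces $|Y|=|Z|=1$, i.e.\ each of $A$ and $B$ into a single translate of $P$. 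Your approach, even if the Green--Tao step were patched, has no mechanism to reduce from a covering by several boxes to a single box, which is exactly where the real work lies.
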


In the unbalanced case $|A| \gg |B|$ we obtain a similar structural result for $B$,
but for $A$ our description is much weaker (due to applying Pl\"unnecke's inequality),
so we will not explicitly state the stability result that follows from the proof. For the case $A=B$, a sharp stability result in this direction was established in $\mathbb{Z}^k$ in \cite{van2023sets}, which was extended to $\mathbb{Z}$ in \cite{van2023locality}.

\section{Lemmas and tools}

We start by gathering various tools and lemmas needed for the proofs of our theorems.

\subsection{Coverings}

Our main tool will be the following version of Freiman's Theorem with separation properties,
which is a special case of Theorem 2.2 and Theorem 1.4  from \cite{van2023locality}.
We require the following definitions for the statement.
Given an abelian group $G$ and $A,B \sub G$ with $0 \in B=-B$,
we say $A$ is $B$-separated if $a-a' \notin B$ for all distinct $a,a' \in A$.
We say that a $k$-GAP $P=\phi(C \cap \mb{Z}^k)$ is $n$-full 
if each side of $C$ has length at least $n$.

\begin{thm}[\cite{van2023locality}] \label{Freiman}
Let $d,s,m\in\mathbb{N}$, $n_1,\dots, n_d\in\mathbb{N}$, and $S\subset\mathbb{Z}$. 
If $|S+S|\leq 1.9\cdot 2^{d}|S|$ then for some $d' \le d$ we have $S\subset X+P$
where $P$ is an $s$-proper $n_{d'}$-full $d'$-GAP with $P=-P$
and $X+X$ is $m^2 \cdot P/m$-separated,
with $|X|=O_{d,n_{d'+1},\dots,n_{d}}(1)$ and $|P|=O_{d,s,m,n_{d'+1},\dots,n_{d}}(|S|)$.
Moreover, if $|S+S|\leq (2^{d'}+1/2)|S|$ then $|X|=1$.
\end{thm}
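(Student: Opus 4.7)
The plan is to derive the stated theorem as a special case of the general structure results of \cite{van2023locality} (specifically their Theorems 1.4 and 2.2), so the proof proposal amounts to showing how their hypotheses specialise in the present setting and how their conclusions can be repackaged into the single statement above.

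First, I would invoke Freiman's theorem for $\mathbb{Z}$ in the form used in \cite{van2023locality}: the bound $|S+S|\le 1.9\cdot 2^d|S|$ forces $S \subset X_0 + P_0$ for some proper $d_0$-GAP $P_0$ with $d_0\le d$, $|P_0|=O_d(|S|)$ and $|X_0|=O_d(1)$. Second, I would install the $n_{d'}$-fullness condition by iterated dimension reduction: whenever a side of the current GAP has length less than the relevant threshold, I fix that coordinate to each of its (boundedly many) possible values and absorb the resulting shifts into $X$. Each such step decreases the dimension by one, multiplies $|X|$ by a factor depending only on $n_{d'+1},\dots,n_d$, and leaves $|P|$ within a constant factor. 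Iterating terminates at some $d'\le d$ with $P$ being $n_{d'}$-full, which is precisely how the parameters $n_{d'+1},\dots,n_d$ enter the bounds on $|X|$ and $|P|$.

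Third, I would ensure $s$-properness and the symmetry $P=-P$ simultaneously: re-center the defining box about the origin (absorbing the centering shift into $X$) and replace $P$ by a dense $s$-proper subGAP via a standard dilation/pigeonhole argument, at a multiplicative loss in $|P|$ depending on $s$ and $d'$. Fourth, for the separation property I set $Q=m^2\cdot P/m$ and consider the equivalence relation on $X+X$ where $a\sim b$ iff $a-b\in Q$; since $|X|=O(1)$ this has only boundedly many classes. Choosing one representative per class and enlarging $P$ to absorb the pairwise differences within each class yields a new pair $(X,P)$ with $X+X$ being $Q$-separated while still covering $S$, keeping both $|P|$ and $|X|$ within the claimed bounds.

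Finally, the moreover clause is handled by a sharper Freiman-type input: the hypothesis $|S+S|\le (2^{d'}+\tfrac12)|S|$ falls into the regime of $3k-4$-type results in $\mathbb{Z}$, which give that $S$ is already contained in a single translate of a proper $d'$-GAP, so $|X|=1$ suffices. The main obstacle I anticipate is the fourth step: arranging the $X+X$ (rather than merely $X-X$) separation while preserving the previously installed $n_{d'}$-fullness, since the enlargements needed to absorb the proximity classes risk creating new short sides. Careful bookkeeping --- enlarging $P$ only by controlled dilations and tracking the interplay among the parameters $m$, $s$ and $n_{d'}$ --- should allow all four structural conditions to coexist with the claimed bounds on $|X|$ and $|P|$.
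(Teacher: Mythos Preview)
The paper does not contain a proof of this theorem: it is quoted verbatim as a black box from \cite{van2023locality} (the text introducing it says it ``is a special case of Theorem 2.2 and Theorem 1.4 from \cite{van2023locality}''), and no argument is given here. So there is nothing in the present paper to compare your proposal against; your sketch goes well beyond what the paper itself does.

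That said, a brief comment on your outline. Steps 1--3 (Freiman, dimension reduction to get fullness, recentering/dilating for $s$-properness and symmetry) are the standard moves and are fine. Step 4 is where the real content lies, and your description is not quite right: you define an equivalence relation on $X+X$ and speak of ``choosing one representative per class'', but the object you are allowed to modify is $X$ (and $P$), not $X+X$. The correct procedure is a merging argument on $X$ itself: whenever two elements of $X$ have sum-classes that collide modulo $Q$, enlarge $P$ by a bounded dilation and drop one of them. Because enlarging $P$ also enlarges $Q=m^2\cdot P/m$, one has to iterate and check termination, and one must verify that the $n_{d'}$-fullness survives the accumulated dilations --- this is exactly the bookkeeping you flag, and it is the nontrivial part of the argument in \cite{van2023locality}. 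Your ``moreover'' step is also too optimistic: a genuine $3k{-}4$-type theorem giving $|X|=1$ under $|S+S|\le (2^{d'}+\tfrac12)|S|$ is itself one of the main results of \cite{van2023locality}, not a standard input one can simply invoke.
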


We also use the well-known Ruzsa Covering Lemma \cite{ruzsa1999analog}.

\begin{lem} \label{ruzsa}
Suppose $A,B\subset \mathbb{Z}$ with $|A+B|\leq K|B|$.
Then $A\subset X+B-B$ for some $X\subset \mathbb{Z}$ with $|X| \leq K$.
\end{lem}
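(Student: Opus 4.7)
The plan is to use the classical greedy maximality argument that Ruzsa introduced for this lemma. I would let $X \subseteq A$ be a maximal subset (with respect to inclusion) such that the translates $\{x+B : x \in X\}$ are pairwise disjoint. We may assume $A$ is finite, since otherwise $|A+B|$ is infinite and there is nothing to prove, so such a maximal $X$ exists by a trivial finite induction.

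The first step is to bound $|X|$. Since the translates $x+B$ for $x \in X$ are pairwise disjoint subsets of $A+B$, I get
\[
 |X|\cdot|B| \;=\; \sum_{x \in X} |x+B| \;=\; \Bigl|\bigcup_{x \in X}(x+B)\Bigr| \;\leq\; |A+B| \;\leq\; K|B|,
\]
so dividing by $|B|$ yields $|X|\leq K$.

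The second step is to verify the covering $A \subseteq X + B - B$. Fix any $a \in A$. If $a \in X$, then picking any $b \in B$ gives $a = a + b - b \in X + B - B$. If $a \notin X$, then by the maximality of $X$ the set $X \cup \{a\}$ cannot have the pairwise-disjoint property, so $(a+B)\cap(x+B) \neq \emptyset$ for some $x \in X$. Writing an element of the intersection as $a+b_1 = x+b_2$ with $b_1,b_2 \in B$ gives $a = x + b_2 - b_1 \in X + B - B$, as required.

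This is a textbook argument with no real obstacle; the only conceptual point is the correct choice of the maximal object, and everything else follows by counting and by reading off the definition of maximality.
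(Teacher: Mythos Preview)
Your argument is correct and is exactly the standard greedy proof of the Ruzsa Covering Lemma. The paper itself does not supply a proof of this lemma; it simply states it as well-known and cites Ruzsa's original paper, so there is nothing to compare against beyond noting that your proof is the classical one.
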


\subsection{Iterated sumsets}

Here we collect various results on iterated sumsets.
We start with a version of Pl\"unnecke's inequality,
in a strengthened form due to Petridis \cite{petridis2012new}.

\begin{lem} \label{lem:P}
Let $A,B \sub \mb{Z}$ with
 $K = |A+B|/|A| = \min \{ |A'+B|/|A'|: \es \ne A'\subset A \}$.
Then $|A+\ell\cdot B|\leq K^\ell |A|$ for all  $\ell \in \mb{N}$.
\end{lem}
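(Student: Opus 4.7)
The plan is to follow Petridis' approach, which reduces the lemma to proving a single intermediate inequality: under the minimality hypothesis on $A$, for every finite $C \sub \mb{Z}$ one has $|A+B+C| \le K|A+C|$. Once this is established, the stated bound follows by a one-line induction on $\ell$ by taking $C = (\ell-1) \cdot B$ and applying the inductive hypothesis $|A+(\ell-1)\cdot B| \le K^{\ell-1}|A|$, with base case $\ell = 0$ trivial.

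The real work is therefore to prove the intermediate inequality, which I would do by induction on $|C|$. The base case $|C| = 1$ is exactly the definition $|A+B| = K|A|$. For the step, write $C = C' \cup \{c\}$ with $c \notin C'$ and introduce the set
\[ Y = \{a \in A : a + c + B \sub A + B + C'\}. \]
Two observations drive the argument. First, if $a \in A \sm Y$ then some $a + c + b$ lies outside $A + B + C'$, so in particular $a + c \notin A + C'$; hence $(A \sm Y) + c$ is disjoint from $A + C'$ and
\[ |A + C| = |A+C'| + |A| - |Y|. \]
Second, by definition $Y + B + c$ lies in the intersection $(A+B+C') \cap (A+B+c)$, so inclusion-exclusion gives
\[ |A+B+C| \le |A+B+C'| + |A+B| - |Y+B|. \]

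Combining these with the inductive hypothesis $|A+B+C'| \le K|A+C'|$, the given $|A+B| = K|A|$, and the minimality of $K$ applied to the subset $Y \sub A$ (which yields $|Y+B| \ge K|Y|$, trivially true if $Y = \es$), one obtains $|A+B+C| \le K|A+C|$, closing the induction. The main conceptual obstacle is selecting the correct auxiliary set $Y$: it must be tailored so that its complement measures the new portion contributed by adjoining $c$ to $A+C'$, while $Y$ itself captures the overlap lowering $|A+B+C|$. Once this definition is in hand, the estimates fit together mechanically, and the hypothesis that $K$ is achieved on $A$ does exactly the necessary work by simultaneously constraining every subset.
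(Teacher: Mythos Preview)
The paper does not give its own proof of this lemma; it simply quotes the result from Petridis. Your argument is exactly Petridis' proof and is correct in substance.

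One small slip worth flagging: from the disjointness of $(A\setminus Y)+c$ and $A+C'$ you can only conclude the inequality $|A+C|\ge |A+C'|+|A|-|Y|$, not the equality you wrote. Your set $Y=\{a\in A: a+c+B\subset A+B+C'\}$ can be strictly larger than $\{a\in A: a+c\in A+C'\}$, so some $a\in Y$ may still have $a+c\notin A+C'$. This does not affect the proof, since the inequality $\ge$ is the direction you need: combining it with $|A+B+C|\le K|A+C'|+K|A|-K|Y|$ still yields $|A+B+C|\le K|A+C|$.
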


Next we require the following result, 
which is a special case of \cite[Theorem 1.4]{van2023locality},
establishing the case $A=B$ of Theorem \ref{quantcor}.
A less quantitative version of this result  
was established by Freiman (also presented by Bilu [Bil99]).
We remark (although it is not needed here) that the asymptotically 
optimal constant $c^{\ref{freimanbilu}}_{k}$ is determined in  \cite{van2023locality}.

\begin{thm}\label{freimanbilu}
For all $k\in\mathbb{N}$ there exists $c^{\ref{freimanbilu}}_{k},e^{\ref{freimanbilu}}_{k}$ 
so that if $A\subset\mathbb{Z}$ with $|\gap_{n}^{k-1}(A)|\geq e^{\ref{freimanbilu}}_{k}|A|$ 
then $|A+A|\geq 2^k(1-c^{\ref{freimanbilu}}_{k}n^{-1})|A|$.
\end{thm}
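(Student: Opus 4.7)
The plan is to prove Theorem \ref{freimanbilu} by induction on $k$, using Theorem \ref{Freiman} as the main structural input and a sliced Brunn-Minkowski argument in the inductive step. The base case $k=1$ reduces to the classical $|A+A|\ge 2|A|-1$ for $A\sub\mb{Z}$, upgraded to $2(1-c_1/n)|A|$ using the hypothesis $|\gap_n^0(A)|\ge e_1|A|$, which (for $e_1>1$) forces $|A|>n$ since a $0$-GAP is a single point and the only candidate covers have size $\le n$; hence the error $1/(2|A|)$ fits inside $c_1/n$ for $c_1=1/2$.

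For the inductive step, assume the statement for $k-1$ and suppose for contradiction that $|A+A|<2^k(1-c_k/n)|A|$ for a suitable $c_k$ to be chosen. In particular $|A+A|<1.9\cdot 2^k|A|$, so Theorem \ref{Freiman}, applied with $d=k$ and with $s$, $m$, $n_1,\dots,n_k$ chosen as functions of $k$ alone, yields $A\sub X+P$ where $P$ is an $s$-proper, symmetric, $n_{d'}$-full $d'$-GAP for some $d'\le k$, with $|X|=O_k(1)$ and $|P|=O_k(|A|)$. If $d'\le k-1$, then $X+P$ is a valid $(k-1)$-dimensional covering (provided $n$ exceeds the $O_k(1)$ bound on $|X|$) with $|X+P|=O_k(|A|)$; choosing $e_k$ larger than this implicit constant contradicts $|\gap_n^{k-1}(A)|\ge e_k|A|$. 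Hence $d'=k$.

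Writing $P=\phi(C\cap\mb{Z}^k)$ with $\phi:\mb{Z}^k\to\mb{Z}$, the $s$-properness (for $s\ge 2$) lets me pull $A$ and $A+A$ back along $\phi$ to sets $A'\sub\mb{Z}^k$ and $A'+A'\sub\mb{Z}^k$ of the same cardinalities, where $A'$ is a relatively dense subset of the box $C\cap\mb{Z}^k$. It remains to establish the sharp Brunn-Minkowski bound $|A'+A'|\ge 2^k(1-c/n)|A'|$ in $\mb{Z}^k$. I would do this by slicing $A'$ along the last coordinate into fibers $A'_j$, verifying that generic fibers inherit $(k-2)$-dimensional non-degeneracy from the $(k-1)$-dimensional non-degeneracy of $A$, applying the $(k-1)$-dim induction hypothesis to bound $|A'_j+A'_{j'}|$ for each pair of fibers, and summing via the standard Dyson-type inequality $|A'+A'|\ge \sum_m \max_{j+j'=m}|A'_j+A'_{j'}|$. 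Combining the $2^{k-1}$ factor from the inductive bound in the slices with the one-dimensional factor of $2$ from the height sumset produces $|A'+A'|\ge 2^k(1-O_k(n^{-1}))|A'|$, contradicting the assumption.

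The main obstacle is achieving the correct linear $1/n$ dependence in the final step. Theorem \ref{Freiman} is essentially qualitative and one must upgrade it via a stability analysis: when $|A'+A'|$ is close to $2^k|A'|$, the set $A'$ must fill most of its bounding box, so that boundary losses remain $O(n^{-1}|A'|)$ rather than a constant fraction. Additionally one must ensure that the induction hypothesis can be applied to \emph{most} slices simultaneously without degrading the error beyond $c_{k-1}/n$, which requires quantitative control on the set of exceptional fibers through the non-degeneracy input $|\gap_n^{k-1}(A)|\ge e_k|A|$. Balancing these two sources of error against the exact factor $2^k$ is what makes the bound $c_k/n$ the sharp dependence.
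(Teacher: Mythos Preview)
The paper does not prove Theorem \ref{freimanbilu}; it is quoted as a special case of \cite[Theorem~1.4]{van2023locality}. However, the proofs of Theorems \ref{stab1} and \ref{main2} in this paper make the intended route clear, and it differs from yours in an essential way. Once one has $d'=k$ (your argument for this step is correct), the ``Moreover'' clause of Theorem \ref{Freiman} applies: the standing assumption $|A+A|<2^k|A|\le(2^k+\tfrac12)|A|$ forces $|X|=1$, so $A$ lies in a single translate of a proper $k$-GAP $P$ with $|P|=O_k(|A|)$. The non-degeneracy hypothesis then forces $P$ to be $n$-full (else project out a short side and contradict $|\gap_n^{k-1}(A)|\ge e_k|A|$), and a single application of Lemma \ref{doublinginGAPs} yields $|A+A|\ge 2^k(|A|-kn^{-1}|P|)\ge 2^k(1-O_k(n^{-1}))|A|$. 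No induction on $k$ is needed.

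Your inductive slicing scheme has a genuine gap beyond the quantitative balancing you flag. The inductive hypothesis concerns only sumsets of the form $A+A$, yet the Dyson-type bound $|A'+A'|\ge\sum_m\max_{j+j'=m}|A'_j+A'_{j'}|$ requires lower bounds on the \emph{mixed} fibre sumsets $A'_j+A'_{j'}$ with $j\ne j'$; restricting to the diagonal $j=j'$ only recovers $|A'+A'|\ge 2^{k-1}(1-c_{k-1}/n)|A'|$ and loses the final factor of $2$. Bounding the off-diagonal terms needs the distinct-sets inequality in dimension $k-1$, i.e.\ Theorem \ref{main1}, which in this paper is deduced \emph{from} Theorem \ref{freimanbilu}, so the argument becomes circular. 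Separately, the assertion that generic fibres inherit $(k-2)$-dimensional non-degeneracy with parameter $n$ is not established and is not a formal consequence of the global hypothesis $|\gap_n^{k-1}(A)|\ge e_k|A|$: a single fibre may carry a positive fraction of the mass while being entirely degenerate, and in any case your application of Theorem \ref{Freiman} only produces an $n_k$-full box with $n_k=O_k(1)$, not an $n$-full one, so the $1/n$ error has no source inside the slices. Both difficulties vanish once one passes through Lemma \ref{doublinginGAPs} directly.
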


By iteration we can deduce the following estimate for iterated sumsets.

\begin{lem}\label{iteratedBilu}
For every $k$ there exist $c^{\ref{iteratedBilu}}_{k}, e^{\ref{iteratedBilu}}_{k}>0$ 
so that if $B\subset \mathbb{Z}$ with
$|\gap_{\ell' n}^{k-1}(\ell' \cdot B)|\geq e^{\ref{iteratedBilu}}_{k} \ell'^k|B|$
for all $\ell' = 2^{i'}$ with $0 \le i' \le j$ then 
 $|2^j \cdot B|\geq (1-c^{\ref{iteratedBilu}}_{k}n^{-1})2^{jk}|B|$.
\end{lem}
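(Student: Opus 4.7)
The plan is to iterate Theorem~\ref{freimanbilu} (Freiman-Bilu) over geometrically spaced parameters. Set $A_i := 2^i \cdot B$ for $0 \le i \le j$. I would apply Theorem~\ref{freimanbilu} to $A_i$ with parameter $2^i n$ at each step $i = 0, 1, \ldots, j - 1$, producing
\[
|A_{i+1}| \;=\; |A_i + A_i| \;\ge\; 2^k \bigl(1 - c^{\ref{freimanbilu}}_k/(2^i n)\bigr)|A_i|.
\]
Multiplying these bounds across $i$ and using $\prod(1 - x_i) \ge 1 - \sum x_i$ together with $\sum_{i \ge 0} 2^{-i} = 2$ telescopes to
\[
|2^j B| \;\ge\; 2^{jk} \bigl(1 - 2 c^{\ref{freimanbilu}}_k/n\bigr)|B|,
\]
giving the lemma with $c^{\ref{iteratedBilu}}_k := 2 c^{\ref{freimanbilu}}_k$. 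The geometrically increasing parameter $2^i n$ at level $i$ is essential so that the total loss remains $O(1/n)$ rather than $O(j/n)$.

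The substantive content is verifying the hypothesis of Theorem~\ref{freimanbilu} at each step $i$, namely $|\gap^{k-1}_{2^i n}(A_i)| \ge e^{\ref{freimanbilu}}_k |A_i|$. The assumption of the lemma supplies $|\gap^{k-1}_{2^i n}(A_i)| \ge e^{\ref{iteratedBilu}}_k \cdot 2^{ik}|B|$, so it is enough to establish the auxiliary upper bound $|A_i| \le D \cdot 2^{ik}|B|$ with $D := e^{\ref{iteratedBilu}}_k / e^{\ref{freimanbilu}}_k$. I would choose $e^{\ref{iteratedBilu}}_k$ as a large multiple of $e^{\ref{freimanbilu}}_k$ so as to make $D$ a comfortable margin.

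The main obstacle is propagating this auxiliary upper bound throughout the iteration, since Theorem~\ref{freimanbilu} supplies only \emph{lower} bounds on doubling in $\mathbb{Z}$. I would proceed by strong induction on $j$, combined with a case split on whether $|A_i| \le D \cdot 2^{ik}|B|$ persists for every $i < j$. In the ``good'' case the iteration closes as above. In the ``bad'' case, let $i^*$ be the smallest index at which this upper bound fails; then $|A_{i^*}|$ already exceeds $D \cdot 2^{i^* k}|B|$ by a wide margin. One then carries this surplus forward to level $j$ by combining (i) the iterated Freiman-Bilu bounds for $i < i^*$, (ii) the trivial monotonicity $|A+A| \ge 2|A| - 1$ between consecutive levels to transport the surplus, and (iii) further applications of Theorem~\ref{freimanbilu} at any later levels $i > i^*$ where the upper bound has been re-established. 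The generous choice of $e^{\ref{iteratedBilu}}_k$ is what allows the bookkeeping to close, and this is where I expect the bulk of the technical effort; the iteration itself and the geometric decay of the error are the clean part of the argument.
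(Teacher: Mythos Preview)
Your approach is essentially the paper's: iterate Theorem~\ref{freimanbilu} at scales $2^i n$, split into a good case (where $|A_i| \le D \cdot 2^{ik}|B|$ for all $i$) and a bad case, and take $c^{\ref{iteratedBilu}}_k = 2c^{\ref{freimanbilu}}_k$. The paper sets $D = 2^k$, i.e.\ $e^{\ref{iteratedBilu}}_k = 2^k e^{\ref{freimanbilu}}_k$, and your good case matches the paper's exactly.

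The one place where the paper is much cleaner is the bad case. Instead of taking the \emph{smallest} index $i^*$ where the upper bound fails and then tracking the surplus forward through later good and bad stretches (which, as you anticipate, requires real bookkeeping, and for which the bound $|A+A|\ge 2|A|-1$ is not obviously strong enough across multiple bad-to-good transitions), the paper takes the \emph{largest} such index $i_0 \le j$. By maximality, $|A_i| \le 2^{(i+1)k}|B|$ for every $i > i_0$, so Theorem~\ref{freimanbilu} applies at all levels $i_0+1,\dots,j-1$; combined with the trivial monotonicity $|A_{i_0+1}| \ge |A_{i_0}| \ge 2^{(i_0+1)k}|B|$, the bad case closes in one line:
\[
|2^j \cdot B| \;\ge\; |A_{i_0+1}| \prod_{i=i_0+1}^{j-1} 2^k\bigl(1 - c^{\ref{freimanbilu}}_k (2^i n)^{-1}\bigr)
\;\ge\; 2^{jk}\bigl(1 - 2c^{\ref{freimanbilu}}_k n^{-1}\bigr)|B|.
\]
So the ``bulk of the technical effort'' you were bracing for disappears with this one change of quantifier.
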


\begin{proof}[Proof of \Cref{iteratedBilu}]
We let $e^{\ref{iteratedBilu}}_{k} = 2^k e^{\ref{freimanbilu}}_k$ 
and $c^{\ref{iteratedBilu}}_{k} = 2 c^{\ref{freimanbilu}}_k$.
The case $j=0$ is trivial and the case $j=1$ follows from \Cref{freimanbilu}.
In general, for any $i<j$ we note that if $|2^i \cdot B| \leq 2^{(i+1)k}|B|$
then $|\gap^{k-1}_{2^in}(2^i\cdot B)| \geq  e^{\ref{freimanbilu}}_{k}|2^i \cdot B|$,
so we can apply \Cref{freimanbilu} to get
$$|2^{i+1}\cdot B|=|2^i\cdot B+2^i\cdot B|\geq (1-c^{\ref{freimanbilu}}_k(2^{i}n)^{-1})2^k|2^i\cdot B|. $$
If this holds for all $i<j$ then the lemma follows from
$$|2^j\cdot B|\geq |B|\prod_{i<j} 2^k(1-c^{\ref{freimanbilu}}_k( 2^{i}n)^{-1})
\geq 2^{jk}|B| \Big(1- c^{\ref{freimanbilu}}_kn^{-1}\sum_{i<j} 2^{-i}\Big)
\geq (1-2c^{\ref{freimanbilu}}_kn^{-1}) 2^{jk}|B|.$$
Otherwise, we can consider the largest $i_0 \le j$ with $|2^{i_0} \cdot B| \geq 2^{(i_0+1)k}|B|$,
note that $|2^{i_0+1} \cdot B| \geq |2^{i_0} \cdot B|$,
and similarly deduce the lemma from 
$|2^j\cdot B|\geq |2^{i_0+1} \cdot B|\prod_{i=i_0+1}^{j-1} 2^k(1-c^{\ref{freimanbilu}}_k( 2^{i}n)^{-1})$.
\end{proof}

Next we require the following result of Lev \cite{lev1996structure}
(see Corollary 1 and its proof) that gives a sharp bound for iterated sumsets
(although we state a simpler bound that suffices here).

\begin{thm}[\cite{lev1996structure}] \label{thm:lev}
Suppose $A \sub \{0,\dots,\ell\}$ with $0,\ell \in A$ and $\gcd(A)=1$.
Let $A' = \{0,\dots,|A|-2\} \cup \{\ell\}$ and $h \in \mb{N}$.
Then $|h \cdot A| \ge |h \cdot A'| \ge h\ell - \ell^2/(|A|-2)$.
\end{thm}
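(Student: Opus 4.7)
The theorem has two separate inequalities, which I would handle by different methods.

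\textbf{The explicit bound $|h \cdot A'| \ge h\ell - \ell^2/(|A|-2)$.}
This is a direct computation. Set $s = |A|$, so $A' = \{0, 1, \dots, s-2\} \cup \{\ell\}$, and partition each $h$-fold sum in $A'$ by the number $j \in \{0, \dots, h\}$ of copies of $\ell$ used. The remaining $h - j$ summands come from $\{0, \dots, s-2\}$ and realise every value in $[0, (h-j)(s-2)]$, so
\[
h \cdot A' \;=\; \bigcup_{j=0}^{h} \bigl( j\ell + \{0, 1, \dots, (h-j)(s-2)\} \bigr) \;\sub\; [0, h\ell].
\]
For $N \in [0, h\ell]$ with $N = q\ell + r$, $0 \le r < \ell$, a short case check (the window $j = q$ is optimal) shows $N \in h \cdot A'$ iff $r \le (h-q)(s-2)$. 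Hence
\[
|[0, h\ell] \sm h \cdot A'| \;=\; \sum_{q=0}^{h-1} \max\bigl(0,\, \ell-1-(h-q)(s-2)\bigr) \;\le\; \sum_{m=1}^{\lfloor(\ell-1)/(s-2)\rfloor} \bigl(\ell-1-m(s-2)\bigr) \;\le\; \frac{\ell^2}{s-2},
\]
which gives the claim.

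\textbf{The comparison $|h \cdot A| \ge |h \cdot A'|$.}
This is the core of the theorem and will be the main obstacle. A first instinct is a compression argument: iteratively replace $a \in A \sm \{0,\ell\}$ by $a - 1$ whenever $a - 1 \notin A$, terminating at $A'$. However, small examples such as $A = \{0, 2, 5, 7\}$ show that a single such compression can \emph{increase} $|h \cdot A|$ at intermediate stages before the process returns to a smaller value, so monotonicity fails and the compression route does not work directly. I would instead compare $h \cdot A$ and $h \cdot A'$ globally.

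My plan is to use the decompositions $h \cdot A = \bigcup_{j=0}^h (j\ell + (h-j) \cdot A_0)$ with $A_0 = A \sm \{\ell\}$ (and likewise $A_0' = \{0, \dots, s-2\}$ for $A'$), and compare the totals by counting elements in each window $W_j = [j\ell, (j+1)\ell - 1]$. In $h \cdot A'$ this count is the length of the initial segment $[j\ell, j\ell + \min((h-j)(s-2), \ell-1)]$, an explicit function of $j$. For $h \cdot A$ the count is $|(h-j) \cdot A_0 \cap [0, \ell-1]|$, and examples show the window-by-window comparison can fail: gaps in $m \cdot A_0$ make $A$ scarcer than $A'$ in some $W_j$, compensated by denser contributions elsewhere.

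The main obstacle is formalising this compensation. I would try to construct an injection $h \cdot A' \hookrightarrow h \cdot A$ by mapping $N \in h \cdot A'$ to an element $N + \delta(N) \in h \cdot A$ with a controlled shift $\delta$, invoking $\gcd(A) = 1$ to ensure the image lies in $h \cdot A$. Alternatively, one can combine a Nathanson-style bound $|m \cdot A_0| \ge m(s-2) + 1$ with Kneser's theorem applied to residues modulo $\gcd$'s of subsets of $A_0$, to replace the injection by a global counting argument on residue classes. Either route requires a careful case analysis of how the gaps of $m \cdot A_0$ distribute inside $W_j$, and this is the technical heart of Lev's original proof.
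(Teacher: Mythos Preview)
The paper does not prove this theorem at all: it is quoted from Lev \cite{lev1996structure} with the parenthetical ``see Corollary~1 and its proof'', so there is no in-paper argument to compare your proposal against.

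That said, your proposal stands on its own merits only partially. Your computation of the explicit bound $|h\cdot A'|\ge h\ell-\ell^2/(|A|-2)$ is correct and complete: the decomposition of $h\cdot A'$ by the number of copies of $\ell$, the characterisation of membership via the quotient-remainder $(q,r)$, and the final arithmetic all check out.

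The comparison $|h\cdot A|\ge |h\cdot A'|$, however, is not proved. You correctly identify that naive compression is non-monotone, and you propose two routes---an explicit injection $h\cdot A'\hookrightarrow h\cdot A$ with controlled shifts, or a residue-class counting argument combining Nathanson-type bounds with Kneser's theorem---but you carry neither to completion. The phrases ``a short case check'', ``either route requires a careful case analysis'', and ``this is the technical heart of Lev's original proof'' are accurate self-assessments: what remains is precisely the substantive content of the result, and your text is a plan rather than a proof. In particular, neither the injection nor the claimed compensation mechanism is constructed, and the role of $\gcd(A)=1$ is invoked but not used in any concrete step. If you wish to make the proposal self-contained you must actually execute one of these routes; otherwise, citing Lev (as the paper does) is the honest option.
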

 
We deduce the following lemma that provides long arithmetic progressions in iterated sumsets.

\begin{lem} \label{lem:ap}
Suppose $A \sub [-\ell,\ell]$ with $0,\ell \in A=-A$ 
and $|A|> \tfrac{2\ell+1}{m+1}$, where $m \in [\ell/2]$.
Then $r[-2m\ell',2m\ell'] \sub 20m \cdot A$, where $r := \gcd(A) \in [m]$ and $\ell'=\ell/r$.
\end{lem}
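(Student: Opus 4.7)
The plan is to normalize $A$ to a set $B := A/r \subseteq [-\ell', \ell']$ with $\gcd(B) = 1$, apply \Cref{thm:lev} to its positive part $B^+$, and then exploit the symmetry $B = -B$ through a pigeonhole argument on pairwise differences.

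First I would set $r := \gcd(A)$, $\ell' := \ell/r$, and $B := A/r$, so that $B = -B$, $0, \pm\ell' \in B$, and $\gcd(B) = 1$. The bound $r \le m$ follows by combining the hypothesis $|A| > (2\ell + 1)/(m+1)$ with the trivial estimate $|A| \le 2\ell/r + 1$ (which holds since $A \subseteq r\mathbb{Z} \cap [-\ell, \ell]$) and the assumption $m \le \ell/2$; the underlying algebra is routine. Now let $B^+ := B \cap [0, \ell']$. Since $B = -B$ and $0 \in B$, we have $|B^+| = (|A|+1)/2$ and $\gcd(B^+) = 1$, with $0, \ell' \in B^+$. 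Setting $s := |B^+| - 2$, the density hypothesis yields $s > (2\ell - 3m - 2)/(2(m+1))$, which, combined with $m \le \ell/2$, gives $s \ge \ell'/(4m)$ outside a handful of small configurations that I would dispatch directly.

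Applying \Cref{thm:lev} to $B^+$ gives $|h \cdot B^+| \ge h\ell' - (\ell')^2/s$, so $h \cdot B^+$ misses at most $(\ell')^2/s + 1$ integers from $[0, h\ell']$. For any $z \in [0, h\ell']$, the number of pairs $(x, y) \in (h \cdot B^+)^2$ with $x - y = z$ is at least $(h\ell' - z + 1) - 2((\ell')^2/s + 1)$, which is positive whenever $z \le h\ell' - 2(\ell')^2/s - 2$. Hence, using $-B^+ \subseteq B$,
\[
2h \cdot B \supseteq (h \cdot B^+) - (h \cdot B^+) \supseteq \bigl[-(h\ell' - 2(\ell')^2/s - 2),\ h\ell' - 2(\ell')^2/s - 2\bigr].
\]
Taking $h = 10m$ and substituting $s \ge \ell'/(4m)$ makes the right-hand side contain $[-2m\ell', 2m\ell']$, so that $20m \cdot A = r \cdot (20m \cdot B) \supseteq r[-2m\ell', 2m\ell']$, as required.

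The main obstacle is the tight constant management: the bound $s \ge \ell'/(4m)$ is just what is needed to push $h$ down to $10m$ and hence the multiplier in the conclusion to $20m$. Verifying it requires $\ell$ to exceed a small multiple of $m$, which is available from $m \le \ell/2$ except in finitely many small cases; in those exceptional cases $A$ is forced to be very dense in $[-\ell, \ell]$ and the desired inclusion can be checked by direct enumeration.
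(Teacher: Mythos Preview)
Your approach is correct and essentially the same as the paper's: normalize by $r$, apply \Cref{thm:lev} to the nonnegative part $B^+$ with $h=10m$ to get high density of $10m\cdot B^+$ in $[0,10m\ell']$, then use a pigeonhole count on pairs (you take differences, the paper takes sums from $10m\cdot(B^+\cup -B^+)$, which is the same thing since $B=-B$) to cover the interval $[-2m\ell',2m\ell']$. The constant bookkeeping is handled with the same level of looseness in both arguments.
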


\begin{proof}
We first note that $\tfrac{2\ell+1}{m+1} < |A| \le (2\ell+1)/r$, so $r \le m$. 
We apply \Cref{thm:lev} to $A' := A/r \cap [0,\ell']$,
obtaining $|10m \cdot A'| \ge 10m\ell' - \ell'^2/(\ell/m-2)  \ge 8m\ell'$.
Thus $10m \cdot A'$ has density at least $4/5$ in $\co(10m \cdot A') = [0,10m\ell']$.
Now fix any $z \in [-2m\ell',2m\ell']$ and consider
$S := \{ (x,y) \in [-10m\ell',10m\ell']^2: x<y, x+y=z \}$. 
Then $|S| \ge 16m\ell'$ and $\{x,y\} \sub 10m \cdot (A' \cup -A')$
for all but at most $4m\ell'$ pairs $(x,y) \in S$.
Thus we can write $z=x+y$ with $\{x,y\} \sub 10m \cdot (A' \cup -A')$,
and so $rz \in 20m \cdot A$.
\end{proof}

Now we apply the previous lemma to the following one,
that takes a covering of an iterated sumset $\ell \cdot B$ by boxes
and shrinks these boxes to obtain a covering by $B$;
crucially, the size of the boxes scales correctly with $\ell$,
up to a constant only depending on the number of boxes.

\begin{lem} \label{shrinkingthebox}
Suppose $P$ is a $40m\ell$-proper $k$-GAP
and $\ell\cdot B\sub X+\ell \cdot P$ with $|X| \le m$.
Then $X+ 20m m! \cdot P/m!$ contains a translate of $B$.
\end{lem}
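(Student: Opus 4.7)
The plan is to use the $40m\ell$-properness of $P$ to lift a dense subset of $B$ into $\ell C \cap \mb{Z}^k$, and then apply \Cref{lem:ap} coordinate-wise to shrink the $\ell$-dilated box back down to scale $1/m!$, at the cost of a $20m$-fold expansion.

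First, fixing arbitrary $b_2,\dots,b_\ell \in B$ and setting $y = -(b_2+\cdots+b_\ell)$, the hypothesis $\ell\cdot B\sub X+\ell\cdot P$ immediately yields $B \sub y + X + \ell\cdot P$, so $B$ lies in the union of at most $m$ translates of $\ell\cdot P$. Pigeonholing on $X$ gives a subset $B^*\sub B$ of size at least $|B|/m$ contained in a single translate of $\ell\cdot P$, and since $\ell C\sub 40m\ell C$ the properness of $P$ uniquely lifts $B^*$ to a set $\tilde B^*\sub\ell C\cap\mb{Z}^k$.

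Next, for each coordinate direction $i=1,\dots,k$ I would form a symmetric subset $A_i\sub[-\ell n_i,\ell n_i]$ containing $0$ and $\pm\ell n_i$, derived from $\pi_i(\tilde B^*-\tilde B^*)$ (augmented with extreme corner elements obtained from further iterated sums inside $\ell\cdot P$ when needed). Applying \Cref{lem:ap} to each $A_i$ with parameter $m$ gives an AP of step $r_i\le m$ and length $\sim 4m\ell/r_i$ inside $20m\cdot A_i$; these coordinate-wise APs assemble into a sub-GAP whose $20m$-fold sumset contains (a translate of) $B^*$ up to a shift in $X$. Handling $B\sm B^*$ by iterating the same argument with the reduced set $X\sm\{x^*\}$ at most $m$ times yields the full covering, with the step sizes $r_i$ across iterations combining via their least common multiple into a divisor of $m!$.

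The main obstacle is controlling the gcd obstructions across the $m$ iterations so that they compound into $m!$ rather than something larger like $m^m$, and ensuring the density hypothesis of \Cref{lem:ap} is still satisfied at each step despite the accumulated pigeonhole losses. By contrast, the $20m$ factor arises only once as a constant from a single invocation of \Cref{lem:ap} and is not multiplied across iterations.
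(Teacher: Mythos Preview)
Your proposal has a genuine gap at the point where you invoke \Cref{lem:ap}. You plan to apply it to the coordinate projections $A_i \subset [-\ell n_i,\ell n_i]$ built from $\pi_i(\tilde B^*-\tilde B^*)$, but the density hypothesis $|A_i|>(2\ell n_i+1)/(m+1)$ has no reason to hold: the lemma you are proving carries \emph{no} lower bound on $|B|$, so $|\tilde B^*-\tilde B^*|$ can be a fixed constant while $\ell n_i$ is arbitrarily large. (Take $|B|=2$, for instance.) The ``pigeonhole losses'' you flag as the obstacle are not the issue; the density is simply absent from the start, and the vague ``augmented with extreme corner elements'' cannot manufacture it. There is also a direction problem: even if \Cref{lem:ap} applied, its conclusion is that $20m\cdot A_i$ \emph{contains} a long AP, not that $B^*$ is \emph{contained} in a short GAP, so the sentence ``these coordinate-wise APs assemble into a sub-GAP whose $20m$-fold sumset contains a translate of $B^*$'' does not follow from what the lemma gives.

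The paper's argument sidesteps both issues by never looking at coordinates of $B$. Instead, after translating so $0\in B$, it fixes a single $b\in B$ and considers the multiples $\{0,b,2b,\dots,\ell b\}\subset \ell\cdot B\subset X+\ell\cdot P$. Pigeonholing on $X$ produces a set $Z\subset[0,\ell]$ of \emph{multipliers} with $|Z|\ge(\ell+1)/m$ and $bZ$ in a single translate of $\ell\cdot P$; crucially $|Z-Z|>(2\ell+1)/(m+1)$ holds automatically, independent of $|B|$. Now \Cref{lem:ap} is applied once, to $Z-Z\subset[-\ell,\ell]$, yielding $r[-\ell,\ell]\subset 20m\cdot(Z-Z)$ with $r\le m$. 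Multiplying by $b$ gives an arithmetic progression $\{jrb:|j|\le\ell\}$ inside $40m\ell\cdot P$; properness lifts this to an AP of $2\ell+1$ terms in the box $40m\ell C$, which forces the step $\phi^{-1}(rb)$ into $20mC$, hence $b\in 20m\cdot P/r\subset 20m\,m!\cdot P/m!$. Since this holds for every $b$, no iteration over pieces of $X$ is needed, and the $m!$ arises simply because $r\in[m]$ divides $m!$.
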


\begin{proof}
By translating, we can assume $0 \in B$.
By enlarging $P$, we can assume $P=-P$ and $P$ is $20m\ell$-proper.
We can also assume $\ell > 20m$, otherwise we are done
by $B \sub \ell \cdot B \sub X+\ell \cdot P \sub X + 20mm!\cdot P/m!$.

Fix any $b \in B$ and consider $\{0,b,\dots,\ell b\} \sub \ell \cdot B \sub X+\ell \cdot P$.
As $|X| \le m$, we can find $Z \sub [0,\ell]$ with $|Z| \ge (\ell+1)/m \ge 4$ 
such that $bZ$ is contained in a single translate of $\ell \cdot P$.
We note that $Z-Z \sub [-\ell,\ell]$ is symmetric and 
$|Z-Z| \ge 2 \lceil (\ell+1)/m \rceil - 1 > (2\ell+1)/(m+1)$. 

By \Cref{lem:ap} we find $r[-2m\ell',2m\ell'] \sub 20m \cdot (Z-Z)$,
where  $r := \gcd(Z-Z) \in [m]$ and $\ell'=r^{-1}\max (Z-Z) \ge |Z|-1 \ge \ell/2m$.
As $b(Z-Z) \sub 2\ell \cdot P$ we deduce
$r[-\ell,\ell]b \sub 20m[-\ell,\ell]b \sub 20m\ell \cdot P$.

To complete the proof, it suffices to show that $b \in 20m \cdot P/r$.
To see this, recalling that  $P$ is $20m\ell$-proper,
we write $P=\phi(C \cap \mb{Z}^k)$ for some  box $C \subset \mb{R}^k$ and
linear map $\phi: \mb{Z}^k \to \mb{Z}$ injective on  $m!\ell C \cap \mb{Z}^k$.
Then $\phi^{-1}(r[-\ell,\ell]b)$ is an arithmetic progression in $20m\ell C$,
so $\phi^{-1}(rb) \in 20m C$, giving $b \in 20m \cdot P/r$.
\end{proof}

\subsection{Brunn-Minkowski in boxes}

In the following lemma we adapt a Brunn-Minkowski result of Green and Tao \cite{green2006compressions}
to the setting of dense sets in unbalanced boxes. For $t \in \mb{R}$ we write $(t)_+ = \max\{t,0\}$.

\begin{lem}\label{doublinginGAPs}
Let $P$ be an $n$-full $(\ell+1)$-proper $d$-GAP and let $Y\subset P$, $Z\subset \ell\cdot P$ be non-empty.
Then $$|Y+Z|^{1/d} \geq (|Y|-dn^{-1}|P|)_+^{1/d}+|Z|^{1/d}.$$
\end{lem}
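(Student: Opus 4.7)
The plan is to follow the classical strategy of Green--Tao for proving Brunn--Minkowski inequalities in lattice boxes: first lift to a box in $\mathbb{Z}^d$, then apply coordinate compressions to pass to downsets, and finally compare the sumset of the downsets to a continuous Brunn--Minkowski inequality.

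First I would exploit the $(\ell+1)$-properness of $P$ to lift to a standard box. Writing $P=\phi(C\cap\mathbb{Z}^d)$, the hypothesis guarantees that $\phi$ is injective on $(\ell+1)C\cap\mathbb{Z}^d$; since $Y+Z\subset(\ell+1)\cdot P$, the sets $Y\subset P$ and $Z\subset \ell\cdot P$ pull back to subsets $Y^*\subset C\cap\mathbb{Z}^d$ and $Z^*\subset\ell C\cap\mathbb{Z}^d$ with $|Y^*+Z^*|=|Y+Z|$. After translating I may assume $C=\prod_{i=1}^d[0,a_i-1]$ with every $a_i\ge n$, so the ambient box is a standard axis-aligned box and $|P|=\prod_i a_i$.

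Next I would apply simultaneous coordinate compressions to $Y$ and $Z$ in each direction: for each axis-parallel line, replace $Y\cap L$ (respectively $Z\cap L$) by the initial segment of the same size. This is the standard Bollob\'as-style compression; it preserves $|Y|$ and $|Z|$, maintains the containments $Y\subset C$ and $Z\subset \ell C$, and does not increase $|Y+Z|$. Iterating until stability yields downsets in $\mathbb{Z}^d_{\ge 0}$. The inequality now needs to be proved only for a pair of downsets contained in $C\cap\mathbb{Z}^d$ and $\ell C\cap\mathbb{Z}^d$. I would then argue by induction on $d$: the case $d=1$ is the trivial $|Y+Z|=|Y|+|Z|-1$ for initial intervals, where the error term $n^{-1}|P|=n^{-1}a_1\ge 1$ absorbs the ``$-1$''. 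For the inductive step, slice perpendicular to the last coordinate, $Y=\bigsqcup_i Y^{(i)}\times\{i\}$ with $Y^{(0)}\supseteq Y^{(1)}\supseteq\cdots$ a nested family of $(d-1)$-dimensional downsets, and similarly for $Z$. For each $k$ the slice $(Y+Z)^{(k)}$ contains $Y^{(i)}+Z^{(k-i)}$ for every valid $i$, and applying the $(d-1)$-dimensional induction hypothesis (with the smaller error $(d-1)n^{-1}|P|/a_d$, since the slice-box $P'$ has size $|P|/a_d$) gives a Brunn--Minkowski lower bound for each such slice. These slice bounds can be aggregated via a one-dimensional layer-cake / Pr\'ekopa--Leindler type computation: passing to the level sets $\{k:|(Y+Z)^{(k)}|^{1/(d-1)}>\mu\}$ and using that for downsets each level set is an initial interval reduces the combination step to ordinary one-dimensional $|A+B|\ge |A|+|B|-1$.

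The main obstacle is the bookkeeping of the error term through the induction, ensuring it aggregates to exactly $dn^{-1}|P|$ rather than an unnecessarily large constant times this. The intended split is $(d-1)n^{-1}|P|$ absorbed by the $(d-1)$-dimensional induction applied slice-by-slice (and summed against $a_d$ slices, where the factor $1/a_d$ precisely cancels), plus one more $n^{-1}|P|$ from the discrete ``$-1$'' loss in the final one-dimensional combination of layer sets; the $n$-fullness hypothesis on every side of $P$ is precisely what makes each of these contributions of size $n^{-1}|P|$. A naive continuous-hull argument (replacing $Y,Z$ by $Y+[0,1)^d$ and applying Brunn--Minkowski in $\mathbb{R}^d$) gives the cleanest starting inequality but loses an extra factor of $2^d$ when passing from $|(Y+Z)+\{0,1\}^d|$ back to $|Y+Z|$, so the sharp form really requires the slicing induction above.
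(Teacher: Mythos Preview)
Your lifting and compression steps match the paper exactly. The divergence comes afterwards, and it stems from a misconception in your final paragraph. You write that the cube-summand trick ``loses an extra factor of $2^d$ when passing from $|(Y+Z)+\{0,1\}^d|$ back to $|Y+Z|$'', and therefore opt for a slicing induction. But the paper shows there is no such loss: after compression one passes to the subset
\[
Y' := \{\,y\in Y : y+\{0,1\}^d\subset Y\,\},
\]
so that $Y'+\{0,1\}^d\subset Y$ and hence $Y+Z \supseteq Y'+Z+\{0,1\}^d$. The cube is absorbed into $Y$, not into $Y+Z$. Applying continuous Brunn--Minkowski to $(Y'+[0,1]^d)+(Z+[0,1]^d)$ then gives $|Y+Z|^{1/d}\ge |Y'|^{1/d}+|Z|^{1/d}$ with no $2^d$ factor. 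The loss $|Y\setminus Y'|$ is exactly the $dn^{-1}|P|$ in the statement: since $Y$ is compressed, $x\in Y\setminus Y'$ forces $x+(1,\dots,1)\notin Y$, so $Y\setminus Y'$ meets each diagonal line $\mathbb{R}(1,\dots,1)$ at most once, and the number of such lines through $P$ is at most $\sum_i |P|/n_i\le dn^{-1}|P|$. This is a two-line argument once you see the trick.

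Your alternative induction-by-slicing is in principle viable, but the combination step you describe is more delicate than you suggest. From slice inequalities of the form $h(i+j)^{1/(d-1)}\ge (f(i)-\epsilon)_+^{1/(d-1)}+g(j)^{1/(d-1)}$ you need to conclude a $1/d$-th power inequality for the \emph{sums} $\Sigma f,\Sigma g,\Sigma h$; a naive layer-cake with one-dimensional $|A+B|\ge|A|+|B|-1$ gives the wrong exponent. Making this step precise in the discrete setting is essentially the content of a discrete Pr\'ekopa--Leindler lemma, and getting the error to aggregate to exactly $dn^{-1}|P|$ (rather than $O_d(n^{-1}|P|)$) through such an argument is not obviously cleaner than the cube-absorption trick you dismissed.
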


\begin{proof}
Following Green and Tao \cite{green2006compressions},
we introduce a cube summand:
for non-empty sets $A,B\subset \mathbb{Z}^d$ we have 
(writing the measure space in the index)
\begin{align} \label{eq1}
\left|A+B+\{0,1\}^d\right|^{1/d}_{\mathbb{Z}^d}&=\left|A+B+\{0,1\}^d+[0,1]^d\right|^{1/d}_{\mathbb{R}^d}=\left|(A+[0,1]^d)+(B+[0,1]^d)\right|^{1/d}_{\mathbb{R}^d} \nonumber \\
&\geq \left|A+[0,1]^d\right|_{\mathbb{R}^d}^{1/d}+\left|B+[0,1]^d\right|^{1/d}_{\mathbb{R}^d}=\left|A\right|_{\mathbb{Z}^d}^{1/d}+\left|B\right|^{1/d}_{\mathbb{Z}^d}.
\end{align}
We may assume $P=\prod_{i=1}^d [0,n_i]$, where each $n_i\geq n$. 
Compress $Y$ and $Z$ onto the coordinate planes and note that 
this does not increase $|Y+Z|$ (see \cite[Lemma 2.8]{green2006compressions}). 
Letting $Y':=\{y\in Y: y+\{0,1\}^d\subset Y\}$, we note that if $x,x+(1,1,\dots, 1)\in Y$
then $x+\{0,1\}^d\subset Y$. As $Y$ is compressed, this implies that $Y\setminus Y'$ 
contains at most one point in every translate of $\mathbb{R}(1,\dots,1)$. 
There are fewer than $\sum_{i}|P|/n_i\leq dn^{-1}|P|$ such lines intersecting $P$, 
so $|Y'|\geq |Y|-dn^{-1}|P|$. We may assume $ |Y|-dn^{-1}|P| > 0$, otherwise
the lemma is immediate from $|Y+Z|\geq |Z|$. As $Y' + \{0,1\}^d\subset Y$,
applying \eqref{eq1} to $Y'$ and $Z$ proves the lemma.
\end{proof}

The inequality in the following lemma will arise when applying
the preceding lemmas to sets covered by several boxes.
For the statement we let $\Ss$ denote counting measure on $\mb{Z}$,
so that $\Ss(f) = \sum_{x \in \mb{Z}} f(x)$.

\begin{lem}\label{kconcaveprekopa}
Let $f,g,h\colon\mathbb{Z}\to \mathbb{R}_{\geq 0}$ so that $\Ss(f), \Ss(g)>0$ 
and for all $x,y\in\mathbb{Z}$ with $f(x),g(y)>0$ we have $h(x+y)^{1/d}\geq f(x)^{1/d}+g(y)^{1/d}$. 
Then $\Ss(h)^{1/d}\geq \Ss(f)^{1/d} + \Ss(g)^{1/d}$.

Moreover, if $f$ or $g$ is supported on more than one integer then
$\Ss(h)^{1/d}\geq (1+\Delta^{\ref{kconcaveprekopa}}_{t,d})( \Ss(f)^{1/d} + \Ss(g)^{1/d} )$,
where $\Delta^{\ref{kconcaveprekopa}}_{t,d}>0$ only depends on $d$ and $t:=\Ss(f)/\Ss(g)$. 
\end{lem}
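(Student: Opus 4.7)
The plan is to reduce to the classical Brunn-Minkowski inequality in $\mathbb{R}^d$ via a geometric lift. Fix a small parameter $\alpha>0$. For each $x\in\mathrm{supp}(f)$, let $B_f(x)\sub\mathbb{R}^d$ be the axis-aligned $d$-cube centered at $(x,0,\ldots,0)$ with side length $\alpha f(x)^{1/d}$ and $d$-volume $\alpha^d f(x)$, and set $A_f=\bigcup_x B_f(x)$; define $A_g,A_h$ analogously. For $\alpha$ sufficiently small (depending on $\max_x f(x)$ and $\max_y g(y)$) the constituent cubes in $A_f$ and in $A_g$ are pairwise disjoint, since their centres along the first axis are integer-separated while their sides are strictly less than $1$; consequently $|A_f|=\alpha^d\Sigma(f)$ and $|A_g|=\alpha^d\Sigma(g)$, while $|A_h|\le\alpha^d\Sigma(h)$ holds trivially.

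The hypothesis $h(x+y)^{1/d}\ge f(x)^{1/d}+g(y)^{1/d}$ is equivalent to the cube containment $B_f(x)+B_g(y)\sub B_h(x+y)$ for each valid pair: all three cubes are axis-aligned and share the centre $(x+y,0,\ldots,0)$, and the side of $B_h(x+y)$ dominates the sum of the sides of $B_f(x)$ and $B_g(y)$. Hence $A_f+A_g\sub A_h$, and the Brunn-Minkowski inequality in $\mathbb{R}^d$ gives
\[
\alpha\Sigma(h)^{1/d}\ge|A_h|^{1/d}\ge|A_f+A_g|^{1/d}\ge|A_f|^{1/d}+|A_g|^{1/d}=\alpha\bigl(\Sigma(f)^{1/d}+\Sigma(g)^{1/d}\bigr),
\]
and dividing by $\alpha$ proves the first inequality.

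For the moreover clause, assume without loss of generality that $|\mathrm{supp}(f)|\ge 2$. Then $A_f$ is a disjoint union of at least two cubes on the first axis, so it is not a translated dilate of $A_g$ and the Brunn-Minkowski step used above is strict. To get an effective bound I would observe that for $\alpha$ small the cubes of $A_f+A_g$ centred at distinct integer first coordinates $z$ are also pairwise disjoint, so $|A_f+A_g|$ equals $\alpha^d\sum_z\max_{x+y=z}(f(x)^{1/d}+g(y)^{1/d})^d$ exactly; restricting this sum to $z\in\mathrm{supp}(f)+\{y^*\}$ for a carefully chosen $y^*\in\mathrm{supp}(g)$ and applying the elementary convexity bound $\sum_x(f(x)^{1/d}+b)^d\ge(\Sigma(f)^{1/d}+b)^d+(|\mathrm{supp}(f)|-1)b^d$ (saturated in the limit where $f$ concentrates on a single atom) produces the desired strict improvement. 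The main obstacle is ensuring that the resulting gap is bounded below by a quantity depending only on $d$ and $t=\Sigma(f)/\Sigma(g)$ rather than on $|\mathrm{supp}(g)|$: one must either select $y^*$ to maximise $g$ and invoke the symmetric bound obtained by swapping $f$ and $g$, or else appeal to a quantitative Brunn-Minkowski stability estimate in $\mathbb{R}^d$ to exploit the non-convexity of $A_f$ directly.
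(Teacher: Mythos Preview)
Your geometric lift for the first inequality is exactly the paper's argument: build $A_f$ as a union of small cubes indexed by $\mathrm{supp}(f)$, check $A_f+A_g\subset A_h$, and apply Brunn--Minkowski in $\mathbb{R}^d$.

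For the moreover clause, the paper does not attempt your elementary route at all; it goes straight to the option you list last, invoking a quantitative stability theorem for Brunn--Minkowski (e.g.\ Figalli--Jerison or the sharp version in \cite{van2023sharp}). The single observation that makes this work is that when $|\mathrm{supp}(f)|\ge 2$ one has $|\co(A_f)|/|A_f|\to\infty$ as $\alpha\to 0$, so $A_f$ is arbitrarily far from any convex set; stability then yields a multiplicative improvement over $|A_f|^{1/d}+|A_g|^{1/d}$ depending only on $d$ and on $|A_f|/|A_g|=t$, which is exactly $\Delta_{t,d}$. Your elementary inequality $\sum_x(f(x)^{1/d}+b)^d\ge(\Sigma(f)^{1/d}+b)^d+(|\mathrm{supp}(f)|-1)b^d$ is correct, but as you yourself flag, the resulting gain scales with $\max g$ rather than $\Sigma(g)$, and the symmetrisation you propose (swapping $f$ and $g$ when $\max g$ is small) runs into the same problem on the other side when both $f$ and $g$ are diffuse. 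So the elementary branch, as written, does not close; the stability citation is what actually carries the clause.
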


\begin{proof}
The first statement follows from the Brunn-Minkowski inequality
as applied to $A_f + A_g \sub A_h$ where
$A_f := \bigcup_{x\in\mb{Z}} xe_1+\epsilon\left(0,f(x)^{1/d}\right)^d$,
noting that $|A_f| = \epsilon^d \Ss(f)$, etc.
The second statement follows by stability (see e.g.~\cite{van2023sharp} or \cite{figalli2017quantitative}),
noting that if $f$ is supported on more than one integer
then $|\co(A_f)|/|A_f| \to \infty$ as $\epsilon \to 0$. 
\end{proof}

\section{Proofs}

We are now ready to prove our main results.
Considering \Cref{doublinginGAPs} (Brunn-Minkowski in boxes),
we see that  \Cref{stab1} implies \Cref{main1}.
Thus it remains to prove  \Cref{stab1} and \Cref{main2}.

\begin{proof}[Proof of \Cref{stab1}]
Let $k,d \in \mb{N}$, $t \in (0,1)$ and fix constants satisfying
\[ k/t \ll d \ll n_d \ll \dots \ll n_1 \ll r \ll n. \]

Let $A,B\subset\mathbb{Z}$ with $t|B|\leq |A|\leq t^{-1}|B|$ 
such that $|\gap_n^{k-1}(B)|\geq n|B|$
and  $|A+B|^{1/k}\leq |A|^{1/k} + (1+n^{-1})|B|^{1/k}$.
Write $D = |\co(A \cup B)|$. After translation, we can assume that 
the distance between $A$ and $B$ is at least $rD$.
Let $S = A \cup B$. By Pl\"unnecke's inequality, we find 
$|A+A|,|B+B|=O_{k,t}(|A|)$, so $|S+S|=O_{k,t}(|S|)$.
We fix $d=O_{k,t}(1)$ so that $|S+S|/|S|\leq 1.9 \cdot 2^d$.
Then we apply  \Cref{Freiman} to $S$ with $s=m=10$ 
and $n_i$ as above to obtain $S\subset X+P$, where $P$ is 
a $10$-proper $n_{d'}$-full $d'$-GAP with $P=-P$ for some  $d' \le d$ 
and $X+X$ is $2 \cdot P$-separated, such that $|X| + |P|/|S| \ll n_{d'}$.
By the non-degeneracy condition $|\gap_n^{k-1}(B)|\geq n|B|$
we see that $d' \ge k$.

Next we claim that any translate of $P$ intersects only one of $A$ and $B$.
To see this, we write $P=\phi(C \cap \mb{Z}^{d'})$ for some box 
$C = \prod_{i=1}^{d'} [0,a_i]$, where each $a_i \ge n_{d'}$.
Due to the distance between $A$ and $B$, 
we can find $i \in [d']$ such that $x_i := \phi(b_i e_i)$
with $b_i := \lfloor a_i/n_{d'} \rfloor$ satisfies $|x_i| > 2D$.
Writing $P = [0,a_i]\phi(e_i) + P'$, we see that at most $b_i$ translates
$t\phi(e_i) + P'$ intersect $A$ or $B$. However, this gives 
$|A|+|B| \le |X| |P|b_i/a_i$, which contradicts $|X| + |P|/|S| \ll n_{d'}$.
Thus the claim holds, so we can find disjoint  $Y,Z\subset X$ 
with  $A\subset Y+P$ and $B\subset Z+P$.

For each $y \in Y$ and $z \in Z$ we write
$A_y:=A\cap y+P$, $B_z:= B\cap z+P$
and $(A+B)_{y+z} := (A+B) \cap (y+z+P+P)$.
As $X+X$ is $2 \cdot P$-separated,
the sets $A_y+B_z$ are pairwise disjoint.
By \Cref{doublinginGAPs}, we have
$|A_y+B_z|^{1/d'}\geq |A_y|^{1/d'}+(|B_z|-d'n_{d'}^{-1}|P|)^{1/d'}$.
Thus we can apply \Cref{kconcaveprekopa} to the functions
$f(y) = |A_y|$, $g(z) = |B_z|-d'n_{d'}^{-1}|P|)$, $h(x)=|(A+B)_x|$
(extended to the domain $\mb{Z}$ with zeroes where undefined), obtaining
\[  |A+B|^{1/d'} \ge \Ss(h)^{1/d'} \ge \Ss(f)^{1/d'} + \Ss(g)^{1/d'}
\ge Q := |A|^{1/d'} + \big(  (1-n_{d'}^{-0.9}) |B| \big)^{1/d'}, \]
where we used  $|X| + |P|/|S| \ll n_{d'}$.
to estimate $\Ss(g) \ge |B|-|X|d'n_{d'}^{-1}|P| \ge (1-n_{d'}^{-0.9}) |B|$.

We claim that this implies $d'=k$. Indeed, suppose $d'>k$, let $\tT = Q^{-d'} |A|$,
and note that $\min\{\tT,1-\tT\} > t/2$ as $t|B|\leq |A|\leq t^{-1}|B|$.
Let $S := (1-t/2)^{1/k - 1/d'}$ and consider
$\tT^{1/k} + (1-\tT)^{1/k} \le S ( \tT^{1/d'} +  (1-t)^{1/d'} ) =  S$.
We deduce
\[ |A+B|^{1/k} \ge Q^{d'/k} \ge S^{-1} Q^{d'/k} (\tT^{1/k} + (1-\tT)^{1/k})
=  S^{-1} ( |A|^{1/k} + \big(  (1-n_{d'}^{-0.9}) |B| \big)^{1/k} ). \]
However, this contradicts  $|A+B|^{1/k}\leq |A|^{1/k} + (1+n^{-1})|B|^{1/k}$, so we deduce $d'=k$. 

Moreover, considering the stability statement in \Cref{kconcaveprekopa},
we deduce that $|Y|=|Z|=1$, i.e.\ $A$ and $B$ are each contained in one translate of $P$.
\end{proof}

\begin{proof}[Proof of \Cref{main2}]
Fix constants satisfying $n \ge c \ge L \gg k$.
Let  $A,B\subset \mathbb{Z}$ where $L \le \ell \le (|A|/|B|)^{1/k} \le 2\ell$ 
with $\ell = 2^j$, $j \in \mb{N}$.
Suppose $|\gap_{\ell' n}^{k-1}(\ell' \cdot B)|\geq n\ell'^k|B|$
for any $\ell'=2^{i'}$, $i' \in\{0,\dots,j+1\}$.
 
Fix $\es \ne A'\subset A$ which minimizes $|A'+B|/|A'|$.
It suffices to show the result for $A'$,
i.e.\ that $|A'+B|^{1/k}\geq |A'|^{1/k}+(1-cn^{-1})|B|^{1/k}$,
as this will imply $(|A+B|/|A|)^{1/k} \ge (|A'+B|/|A'|)^{1/k}
\ge 1 + (1-cn^{-1})(|B|/|A|)^{1/k}$.

We can assume $|A'+B|/|A'| \le |A+B|/|A| \le (1 + (|B|/|A|)^{1/k})^k$.
By \Cref{lem:P} (Petridis' form of Pl\"unnecke's inequality) we deduce
\[ |\ell\cdot B| \le  |A'+\ell\cdot B| \leq |A'|(1+(|B|/|A|)^{1/k})^{k\ell} \le |A|(1+1/\ell)^{k\ell} 
\le e^k |A|  \le (2e\ell)^k |B|. \]
Considering $\frac{2|\ell\cdot B|}{\ell^{k} |B|}
=\prod_{i=0}^{j-1} \frac{|2^{i+1}\cdot B|}{2^k|2^i \cdot B|}$,  
we see that we can choose $\ell' = 2^{i'}$ such that
$\frac{|2\ell' \cdot B|}{2^k|\ell' \cdot B|} \le (2(2e)^k)^{1/j} < 1 + 2^{-2k}$, 
as $j \ge \log_2 L \gg k$. By Freiman's Theorem,
we deduce $\ell' \cdot B \sub P'$ for some $80$-proper $d$-GAP $P'$ 
with $d \le k$ and $|P'| \le O_k(|\ell' \cdot B|) \le O_k(\ell'^k |B|)$,
where the final inequality holds as
otherwise we would contradict $ |\ell\cdot B| \le O_k(\ell^k |B|)$
by repeated application of \Cref{iteratedBilu} (Freiman-Bilu for iterated sumsets).

By the non-degeneracy condition
$|\gap_{\ell' n}^{k-1}(\ell' \cdot B)|\geq n\ell'^k|B| \gg |P|$
we have $d=k$ and $P'$ is $\ell' n$-full.
As $P'$ is $80$-proper we have
 $\ell' \cdot B \sub P' \sub \ell' \cdot P''$
 for some $40\ell'$-proper $n$-full $k$-GAP $P'' \sub P'$
 with $\ell'^k |P''| = O_k(|P'|)$. 
 Applying \Cref{shrinkingthebox} we find
 $B \sub P$ for some translate $P$ of $20 \cdot P''$,
where $|P|=O_k(|P''|) = O_k(|B|)$.
 
We let $Q = \ell \cdot P$, noting that $\ell \cdot B \sub Q$ and $|Q| \le O_k(|\ell \cdot B|)$.
Recalling  $ |A'+\ell\cdot B| = O_k(|\ell \cdot B|)$, by the Ruzsa Covering Lemma
we have $A' \sub X + Q-Q$ with $|X| = O_k(1)$.
Next we apply a merging process, where we start with $X'=X$ and $Q'=Q-Q$,
and if we find any $x,y \in X'$ such that $x+Q'+P$ intersects $y+Q'+P$
then we replace $X'$ by $X' \sm \{y\}$ and $Q'$ by $6 \cdot Q'$,
noting that $y+Q'+P \sub (x+Q'-Q'+P-P) + Q'+P \sub x + 6 \cdot Q'$.
Thus we terminate with $A' \sub X' + Q'$ for some $X' \sub X$ 
with $|Q'| \le O_k(|Q|)  \le O_k(|\ell \cdot B|)$.

By the non-degeneracy condition
$|\gap_{\ell n}^{k-1}(\ell \cdot B)|\geq n\ell^k|B| \gg |Q'|$
we can assume that $Q'$ is $2$-proper
(otherwise $Q'$ would be contained in a $(k-1)$-GAP
of size $O_k(|Q'|)$ - see e.g.~\cite[Lemma 5.4]{van2023locality}).
Recalling that $P$ is $n$-full, applying \Cref{kconcaveprekopa} 
as in the proof of \Cref{stab1} we deduce 
\[ |A'+B|^{1/k} \ge |A'|^{1/k} + (|B|-kn^{-1}|P|)^{1/k}.\]
As $|P|=O_k(|P''|) = O_k(|B|)$ this concludes the proof.
\end{proof}

\section{Concluding remarks}

Given \Cref{quantcor} it would be interesting to determine the optimal constants $c^{\ref{quantcor}}_{k,t}$. In this direction the following asymptotically optimal result was derived in \cite{van2023locality}.
\begin{thm}
For all $k\in\mathbb{N}$, there exists $e_{k}$, and for all $n\in\mathbb{N}$ there exist $m_{n,k}$ so that if $A\subset\mathbb{Z}$ satisfies 
$|\gap_{m_{n,k}}^{k-2}(A)|,|\gap_{n}^{k-1}(A)|\geq e_{k}|A|$ then $|A+A|\geq 2^k(1-(1+o(1))\frac{k}{4}n^{-1})|A|$, where $o(1)\to 0$ as $n\to \infty$.
\end{thm}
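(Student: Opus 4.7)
The plan is to deploy the stability-method framework of the paper, sharpened by a final Lagrange-type optimization. Apply \Cref{Freiman} to $A$ with carefully chosen separation and fullness parameters; since one may assume $|A+A|\le O_k(|A|)$, this yields $A\sub X+P$ with $P$ a proper, $n_k$-full, $k$-GAP satisfying $P=-P$, and with $|X|=O_k(1)$, $|P|=O_k(|A|)$, and $X+X$ well-separated. The $(k-1)$-non-degeneracy forces the dimension to be exactly $k$, and an argument as in the proof of \Cref{stab1} (separation combined with the stability part of \Cref{kconcaveprekopa}) reduces to $|X|=1$, so $A\sub P=\phi(\prod_{i=1}^k[0,N_i-1])$.

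Next, apply \Cref{doublinginGAPs} iteratively in each coordinate direction to show $|A+A|\ge(1-o(1))|P+P|$, provided $A$ is $(1-o(1))$-dense in $P$ (the low-density regime gives the conclusion directly by \Cref{doublinginGAPs}). Since $|P+P|/|P|=\prod_{i=1}^k(2-1/N_i)=2^k\prod_{i=1}^k(1-1/(2N_i))$, the task reduces to bounding $\sum_i 1/N_i$ from above. The two non-degeneracy conditions precisely serve this purpose: the $(k-1)$-condition forces the smallest side $N_1\ge n(1+o(1))$ by considering all $(k-1)$-GAP covers of the box, and the $(k-2)$-condition forces the product of the two smallest sides $N_1 N_2\ge m_{n,k}(1+o(1))$. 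Choosing $m_{n,k}=\Theta_k(n^2)$ with the specific constant dictated by a Lagrange calculation, the minimum of $\sum_i 1/N_i$ over this feasible region equals $k/(2n)(1+o(1))$, attained at an extremal profile with one smaller side and $k-1$ equal larger sides. Substituting gives $|A+A|/|A|\ge 2^k(1-(1+o(1))k/(4n))$.

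The main obstacle is the step identifying sharp lower bounds on $N_1$ and $N_1 N_2$ from the non-degeneracy conditions. A generic $(k-1)$- or $(k-2)$-GAP cover of $A$ may be skewed with respect to the box coordinates of $P$; one must show that the $s$-properness of $P$ (from \Cref{Freiman}) forces any such skewed cover to be no more efficient than an axis-aligned one, up to $o(1)$ loss, so that the bounds $N_1\ge n(1+o(1))$ and $N_1 N_2\ge m_{n,k}(1+o(1))$ are genuinely enforced. Tuning $m_{n,k}$ so that the resulting Lagrangian optimum exactly matches the sharp constant $k/4$---and verifying this across all feasible profiles, including the asymmetric extremals that arise for $k\ge 3$---is the most technically delicate part.
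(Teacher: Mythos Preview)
First, note that this theorem is \emph{not} proved in the present paper: it is quoted from \cite{van2023locality} as a previously established result, and the paper gives no argument for it. So there is no ``paper's own proof'' to compare against here; I can only assess your sketch on its own merits.

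Your outline has a genuine gap in the non-dense regime. After the reduction to $A\sub P$ with $P$ a $k$-GAP box of sides $N_1\le\cdots\le N_k$, you propose a dichotomy: either $|A|=(1-o(1))|P|$, or else \Cref{doublinginGAPs} handles things directly. But \Cref{doublinginGAPs} yields only $|A+A|^{1/k}\ge(|A|-kN_1^{-1}|P|)_+^{1/k}+|A|^{1/k}$, which for $|A|=\alpha|P|$ with $\alpha$ bounded away from $1$ gives $|A+A|/|A|\ge 2^k\bigl(1-O_k(1)\cdot\alpha^{-1}N_1^{-1}\bigr)$. This is a constant of order $O_k(\alpha^{-1})$ in front of $n^{-1}$, not the sharp $k/4$. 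Crucially, the extremal discrete cone is \emph{not} $(1-o(1))$-dense in any box containing it (its density in its bounding box is a fixed constant $<1$), so it falls squarely into your ``low-density'' branch, where your argument loses the sharp constant.

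There are also inconsistencies in the dense branch. You write that the task is to bound $\sum_i 1/N_i$ \emph{from above}, yet then speak of the \emph{minimum} of $\sum_i 1/N_i$ over the feasible region; you need the maximum. Moreover, the constraints you extract---$N_1\ge n(1+o(1))$ and $N_1N_2\ge m_{n,k}(1+o(1))$---do not by themselves force $\sum_i 1/N_i\le k/(2n)(1+o(1))$: the sides $N_3,\dots,N_k$ are only constrained via the $(k-1)$-condition to be at least roughly $n$, which yields $\sum_i 1/N_i\le k/n$ and hence only the constant $k/2$. To reach $k/4$ from box-counting alone you would need every $N_i\gtrsim 2n$, and the two hypotheses do not deliver this for the higher-indexed sides. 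Finally, the step ``$|A+A|\ge(1-o(1))|P+P|$ when $A$ is $(1-o(1))$-dense'' is not a consequence of iterating \Cref{doublinginGAPs} in coordinate directions; that lemma controls $|A+A|$ in terms of $|A|$ and $|P|$, not in terms of $|P+P|$, and a separate boundary/representation-counting argument would be needed.
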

This result establishes, up to the $1+o(1)$ term, the optimal constant as shown by considering a discrete cone (see \cite[Example 3.19]{van2023locality}).
However, in the context of this paper it is natural to ask for the optimal constant without
the additional condition $|\gap_{m_{n,k}}^{k-2}(A)|\geq e_{k}|A|$, the discrete simplex $S_n:=\{x\in[0,n]^k: \sum_i x_i \leq n\}$ gives a worse constant. We leave you with the following question.
\begin{quest}
What is the smallest constant $c_{k,t}$ for which \Cref{quantcor} holds?
In particular, among all sets in $\mathbb{Z}^k$ not contained in $n$ hyperplanes does the discrete simplex $S_n$ have minimal doubling?
\end{quest}

\bibliographystyle{alpha}
\bibliography{references}

\end{document}